\definecolor{verylight}{gray}{0.97}
\definecolor{light}{gray}{0.9}
\definecolor{medium}{gray}{0.85}
\definecolor{dark}{gray}{0.6}
\def\NZQ{\mathbb}               
\def\NN{{\NZQ N}}
\def\ZZ{{\NZQ Z}}
\def\RR{{\NZQ R}}
\def\frk{\mathfrak}               
\def\pp{{\frk p}}
\def\mm{{\frk m}}
\def\G{{\mathcal G}}
\def\pd{\textup{proj}\phantom{.}\!\textup{dim}}
\def\opn#1#2{\def#1{\operatorname{#2}}} 
\opn\chara{char} \opn\length{\ell} \opn\pd{pd} \opn\rk{rk}
\opn\projdim{proj\,dim} \opn\injdim{inj\,dim} \opn\rank{rank}
\opn\depth{depth} \opn\grade{grade} \opn\height{height}
\opn\embdim{emb\,dim} \opn\codim{codim}
\opn\Tr{Tr} \opn\bigrank{big\,rank}
\opn\superheight{superheight}\opn\lcm{lcm}
\opn\trdeg{tr\,deg}
	\opn\reg{reg} \opn\lreg{lreg} \opn\ini{in} \opn\lpd{lpd}
	\opn\size{size} \opn\sdepth{sdepth}
	\opn\link{link}\opn\fdepth{fdepth}\opn\lex{lex}
	\opn\tr{tr}
	\opn\type{type}
	\opn\gap{gap}
	\opn\diam{diam}
	\opn\Mod{Mod}
	\opn\div{div} \opn\Div{Div} \opn\cl{cl} \opn\Cl{Cl}
	\opn\Spec{Spec} \opn\Supp{Supp} \opn\supp{supp} \opn\Sing{Sing}
	\opn\Ass{Ass} \opn\Min{Min}\opn\Mon{Mon}
	\opn\Ann{Ann} \opn\Rad{Rad} \opn\Soc{Soc}
	\opn\Im{Im} \opn\Ker{Ker} \opn\Coker{Coker} \opn\Am{Am}
	\opn\Hom{Hom} \opn\Tor{Tor} \opn\Ext{Ext} \opn\End{End}
	\opn\Aut{Aut} \opn\id{id}
	\opn\nat{nat}
	\opn\pff{pf}
	\opn\Pf{Pf} \opn\GL{GL} \opn\SL{SL} \opn\mod{mod} \opn\ord{ord}
	\opn\Gin{Gin} \opn\Hilb{Hilb}\opn\sort{sort}
	\opn\PF{PF}\opn\Ap{Ap}
	\opn\dist{dist}
	\opn\aff{aff}
	\opn\relint{relint} \opn\st{st}
	\opn\lk{lk} \opn\cn{cn} \opn\core{core} \opn\vol{vol}  \opn\inp{inp} \opn\nilpot{nilpot}
	\opn\link{link} \opn\star{star}\opn\lex{lex}\opn\set{set}
	\opn\width{wd}
	\opn\Fr{F}
	\opn\QF{QF}
	\opn\G{G}
	\opn\type{type}\opn\res{res}
	\opn\conv{conv}
	\opn\sr{sr}
	\opn\gr{gr}
	\def\pot#1#2{#1[\kern-0.28ex[#2]\kern-0.28ex]}
	\opn\dirlim{\underrightarrow{\lim}}
	\opn\inivlim{\underleftarrow{\lim}}
	\def\Implies{\ifmmode\Longrightarrow \else
		\unskip${}\Longrightarrow{}$\ignorespaces\fi}
	\def\implies{\ifmmode\Rightarrow \else
		\unskip${}\Rightarrow{}$\ignorespaces\fi}
	\def\iff{\ifmmode\Longleftrightarrow \else
		\unskip${}\Longleftrightarrow{}$\ignorespaces\fi}
	\newtheorem{Theorem}{Theorem}[section]
	\newtheorem{Lemma}[Theorem]{Lemma}
	\newtheorem{Corollary}[Theorem]{Corollary}
	\newtheorem{Proposition}[Theorem]{Proposition}
	\newtheorem{Remark}[Theorem]{Remark}
	\newtheorem{Problem}[Theorem]{Problem}
	\newtheorem{Conjecture}[Theorem]{Conjecture}
	\newtheorem{Question}[Theorem]{Question}
	\let\epsilon\varepsilon
	\let\kappa=\varkappa
	\def\qed{\ifhmode\textqed\fi
		\ifmmode\ifinner\hfill\quad\qedsymbol\else\dispqed\fi\fi}
	\def\textqed{\unskip\nobreak\penalty50
		\hskip2em\hbox{}\nobreak\hfill\qedsymbol
		\parfillskip=0pt \finalhyphendemerits=0}
	\def\dispqed{\rlap{\qquad\qedsymbol}}
	\opn\dis{dis}
	\def\pnt{{\raise0.5mm\hbox{\large\bf.}}}
	\opn\Lex{Lex}
	\opn\Max{Max}
	\opn\Shad{Shad}
	\opn\astab{astab}
	\def\p{\mathfrak{p}}
	\def\q{\mathfrak{q}}
	\def\m{\mathfrak{m}}
	\def\bideg{\textup{bideg}}
	\opn\v{v}
\begin{document}
		
		\title{Asymptotic behaviour of integer programming and the $\textup{v}$-function of a graded filtration}	
		\author{Antonino Ficarra, Emanuele Sgroi}
		
		\address{Antonino Ficarra, Department of mathematics and computer sciences, physics and earth sciences, University of Messina, Viale Ferdinando Stagno d'Alcontres 31, 98166 Messina, Italy}
		\email{antficarra@unime.it}
		
		\address{Emanuele Sgroi, Department of mathematics and computer sciences, physics and earth sciences, University of Messina, Viale Ferdinando Stagno d'Alcontres 31, 98166 Messina, Italy}
		\email{emasgroi@unime.it}
		
		\thanks{
		}
		
		\subjclass[2020]{Primary 13F20; Secondary 13F55, 05C70, 05E40.}
		
		\keywords{$\textup{v}$-number, primary decomposition, associated primes, monomial ideals}
		
		\maketitle
		
		\begin{abstract}
			The $\text{v}$-function of a graded filtration $\mathcal{I}=\{I_{[k]}\}_{k\ge0}$ is introduced. Under the assumption that $\mathcal{I}$ is Noetherian, we prove that the $\text{v}$-function $\text{v}(I_{[k]})$ is an eventually quasi-linear function. This result applies to several situations, including ordinary powers, and integral closures of ordinary powers, among others. As another application, we investigate the asymptotic behaviour of certain integer programming problems. Finally, we present the \textit{Macaulay2} package $\texttt{VNumber}$.
		\end{abstract}

		\section*{Introduction}
		
		Let $R$ be a finitely generated $\NN$-graded domain, and let $\mm=\bigoplus_{k>0}R_k$. Suppose that $R/\mm\cong R_0$ is an infinite field. Let $I\subset R$ be a homogeneous ideal. Then, for any associated prime $\p\in\Ass(I)$, there exists a homogeneous element $f\in R$ such that $(I:f)=\p$. Following \cite{CSTVV20}, we define the $\v_\p$-number of $I$ to be the least degree of such an element. Whereas, the $\v$-number of $I$ is defined as $\v(I)=\min_{\p\in\Ass(I)}\v_\p(I)$. See also \cite{BM23,BMS24,F2023,FM,GRV21,S2023,SS20,VS24}.\smallskip
		
		In \cite{FS2}, if $R$ is the standard graded polynomial ring $S$ over a field $K$ and $I$ is a graded ideal of $S$, we proved that $\v(I^k)$ is a linear function for $k\gg0$, and $\lim_{k\rightarrow\infty}\v(I^k)/k$ is equal to the initial degree of $I$. This result was shown independently by Conca \cite{Conca23} in the more general frame of any graded ideal $I\subset R$. Further generalizations and refinements of these results were considered in \cite{Fior24,Ghosh24,KS23}.\smallskip
		
		In this paper, we generalize the results in \cite{Conca23,FS2,Fior24,Ghosh24,KS23}. Let $\mathcal{I}=\{I_{[k]}\}_{k\ge0}$ be a Noetherian graded filtration of $R$ (see Section \ref{Sec1-FS24} for the precise definitions). In our main Theorem \ref{Thm:FS24}, we prove that the function $\v(I_{[k]})$ is a quasi-linear function in $k$ for all $k\gg0$. This result applies to several situations. For instance, if $I_{[k]}=I^k$, $I_{[k]}=\overline{I^k}$ or $I_{[k]}=I^{(k)}$ for all $k\ge0$, where $I\subset R$ is a graded ideal, see also \cite{VS24} and \cite[Example 5.2]{DMNB23} for other classes of graded filtrations. In the case of a monomial ideal $I$ in the polynomial ring $S$, the computation of the functions $\v_\p(I^k)$ and $\v_\p(\overline{I^k})$ amounts to solve certain asymptotic integer programming problems and determine their optimal solutions. We explore these problems in Section \ref{Sec3-FS24}. Let $\Ass^\infty(\mathcal{I})$ be the set of those primes $\p\in\textup{Spec}(R)$ such that $\p\in\Ass(I_{[k]})$ for infinitely many $k$. In Section \ref{Sec2-FS24}, using Koszul homology and localization, we characterize the set $\Ass^\infty(\mathcal{I})$. Section \ref{Sec4-FS24} contains a self contained introduction to the \textit{Macaulay2} \cite{GDS} package \texttt{VNumber} \cite{FSPack} and its main features. We end the paper in Section \ref{Sec5-FS24} with a series of natural questions.

		\section{The $\v$-function of a graded filtration}\label{Sec1-FS24}
		
		Let $R$ be a finitely generated $\NN$-graded domain, and let $\mm=\bigoplus_{k>0}R_k$. Suppose that $R/\mm\cong R_0$ is an infinite field. A \textit{graded filtration} of $R$ is a family $\mathcal{I}=\{I_{[k]}\}_{k\ge0}$ of homogeneous ideals of $R$ satisfying:
		\begin{enumerate}
			\item[(i)] $I_{[0]}=R$,
			\item[(ii)] $I_{[k+1]}\subseteq I_{[k]}$ for all $k\ge0$,
			\item[(iii)] and $I_{[k]}I_{[\ell]}\subseteq I_{[k+\ell]}$ for all $k,\ell\ge0$.
		\end{enumerate}
		
		We use the notation $I_{[k]}$ to not confuse the ideal $I_{[k]}$ with the $k$th homogeneous component $I_k$ of a homogeneous ideal $I\subset R$.
		
		We say that $\mathcal{I}$ is \textit{Noetherian} if the Rees algebra $\mathcal{R(I)}=\bigoplus_{k\ge0}I_{[k]}$ is Noetherian.
		
		\begin{Lemma}\label{Lem:AssInftyFiltr}
			Let $\mathcal{I}=\{I_{[k]}\}_{k\ge0}$ be a Noetherian graded filtration of $R$. Then, there exists an integer $c>0$ such that $$\Ass(I_{[ck+j]})=\Ass(I_{[c(k+1)+j]})\ \ \ \textit{for all}\ k\gg0,$$ for all $j=0,\dots,c-1$.
		\end{Lemma}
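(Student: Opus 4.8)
The plan is to reduce, by passing to a Veronese subalgebra, to the stabilization of associated primes of the graded components of a finitely generated module over a standard graded Noetherian ring, and then to bridge from the associated graded pieces to the quotients $R/I_{[ck+j]}$ by a short exact sequence argument. Throughout I write $\Ass(I)$ for $\Ass(R/I)$, as in the introduction. First I would exploit the Noetherian hypothesis: since $\mathcal{R}(\mathcal{I})=\bigoplus_{k\ge0}I_{[k]}$ is a Noetherian $\NN$-graded ring, it is a finitely generated $R$-algebra, and a standard fact on Veronese subrings provides an integer $c>0$ such that $T:=\mathcal{R}(\mathcal{I})^{(c)}=\bigoplus_{k\ge0}I_{[ck]}$ is generated over $T_0=R$ by $T_1=I_{[c]}$ (that is, $T$ is standard graded), while $\mathcal{R}(\mathcal{I})$ is a finitely generated $T$-module. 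I then fix $j\in\{0,\dots,c-1\}$ and set $A_k:=\Ass(R/I_{[ck+j]})$; it suffices to prove that $A_k$ is constant for $k\gg0$.

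Next I would produce a finitely generated graded $T$-module whose components carry the relevant associated primes. By property (iii) of the filtration, each $I_{[c\ell]}=T_\ell$ maps $I_{[ck+j]}/I_{[c(k+1)+j]}$ into $I_{[c(k+\ell)+j]}/I_{[c(k+\ell+1)+j]}$, so
\[
\mathcal{A}_j\ :=\ \bigoplus_{k\ge0}I_{[ck+j]}\big/I_{[c(k+1)+j]}
\]
is a graded $T$-module. It is a quotient of the graded $T$-module $\bigoplus_{k\ge0}I_{[ck+j]}$, which is the residue-$j$ strand of $\mathcal{R}(\mathcal{I})$ and hence a finitely generated $T$-module, being a direct summand of the finitely generated $T$-module $\mathcal{R}(\mathcal{I})=\bigoplus_{j=0}^{c-1}\bigoplus_{k\ge0}I_{[ck+j]}$. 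Thus $\mathcal{A}_j$ is finitely generated over $T$. Applying the stabilization theorem for the associated primes of the graded components of a finitely generated graded module over a standard graded Noetherian ring, I obtain that $B_k:=\Ass(I_{[ck+j]}/I_{[c(k+1)+j]})$ is independent of $k$ for $k\gg0$, say $B_k=B$.

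Finally I would bridge to the quotients via the short exact sequence
\[
0\longrightarrow I_{[ck+j]}/I_{[c(k+1)+j]}\longrightarrow R/I_{[c(k+1)+j]}\longrightarrow R/I_{[ck+j]}\longrightarrow 0 .
\]
The surjection on the right gives $A_k\subseteq A_{k+1}$, so $(A_k)_k$ is an increasing sequence, while the inclusion on the left gives $A_{k+1}\subseteq A_k\cup B_k$. Since $B_k=B$ is a fixed finite set for $k\ge k_0$, the union $\bigcup_{k\ge k_0}A_k$ is contained in the finite set $A_{k_0}\cup B$; an increasing chain of subsets of a finite set is eventually constant, so $A_k=A_{k+1}$ for $k\gg0$. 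As $j$ ranges over $\{0,\dots,c-1\}$, this is precisely the asserted equality $\Ass(I_{[ck+j]})=\Ass(I_{[c(k+1)+j]})$ for $k\gg0$.

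The main obstacle is the middle step: the stabilization theorem for the graded components of a finitely generated graded module over a standard graded Noetherian ring, which is the engine delivering the eventual constancy of $B_k$, together with the Veronese standardization of $\mathcal{R}(\mathcal{I})$ that makes this input applicable and that forces the period $c$. Once these are in place, the finite-generation verifications and the short exact sequence bookkeeping are routine.
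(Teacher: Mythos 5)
Your overall architecture is sound and runs parallel to the paper's proof: the paper also chooses $c$ so that $T=\mathcal{R}(\mathcal{I})^{(c)}$ is standard graded, observes that each strand $\bigoplus_{k\ge0}I_{[ck+j]}$ is a finitely generated $T$-module, and then invokes \cite[Lemma 2.3]{Ahn95} (the module version of \cite[Proposition 2]{MAE79}) as a black box; your steps with $\mathcal{A}_j$ and the short exact sequence amount to re-proving that black box by hand, and your finite-generation verifications are correct. However, the bridging step contains a genuine error: the claim that ``the surjection on the right gives $A_k\subseteq A_{k+1}$'' is false. For a surjection of modules $M\twoheadrightarrow N$ there is no containment $\Ass(N)\subseteq\Ass(M)$ at all (take $R\twoheadrightarrow R/\p$ with $R$ a domain and $\p\ne0$: then $\Ass(R)=\{(0)\}$ while $\Ass(R/\p)=\{\p\}$). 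Nor can monotonicity of the sets $A_k$ be obtained from any other a priori principle: already for ordinary powers of ideals the sets $\Ass(R/I^k)$ need not increase with $k$ (persistence of associated primes fails in general), and their eventual monotonicity is merely a trivial consequence of the stabilization you are trying to prove. Since your conclusion is drawn from ``increasing and bounded, hence eventually constant'', and the increasing half rests on this false claim, the proof as written does not go through.

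The gap is repairable with ingredients you already have, using the injection on the left of your exact sequence instead of the surjection on the right. The inclusion $I_{[ck+j]}/I_{[c(k+1)+j]}\hookrightarrow R/I_{[c(k+1)+j]}$ gives $B_k\subseteq A_{k+1}$. Combining this with $A_{k+1}\subseteq A_k\cup B_k$ (which is correct, being the standard estimate for the middle term of a short exact sequence), one gets for all $k\ge k_0$ (where $B_k=B$) that $A_{k+1}\setminus B\subseteq A_k\setminus B$. A descending chain of finite sets is eventually constant, say equal to $C$, and moreover $B\subseteq A_k$ for all $k>k_0$; hence $A_k=B\cup C$ for all $k\gg0$. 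This is precisely the McAdam--Eakin argument, and with this substitution your proof becomes a correct, essentially self-contained alternative to the paper's citation of \cite[Lemma 2.3]{Ahn95}.
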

		\begin{proof}
			Since $\mathcal{R(I)}$ is a graded Noetherian ring, there exists $c>0$ such that the Veronese subalgebra $$\mathcal{R(I)}^{(c)}=\bigoplus_{k\ge0}I_{[ck]}$$ is standard graded (see \cite{R79} or \cite[Theorem 2.1]{HHT2007}). For each integer $j\in\{0,\dots,c-1\}$, $\mathcal{R(I)}_{(j)}=\bigoplus_{k\ge0}I_{[ck+j]}$ is a finitely generated $\mathcal{R(I)}^{(c)}$-module. Hence \cite[Lemma 2.3]{Ahn95} (which is the module version of \cite[Proposition 2]{MAE79}) guarantees that $$\Ass(I_{[ck+j]})=\Ass(I_{[c(k+1)+j]}),$$ for all $k\gg0$ and all $j=0,\dots,c-1$.
		\end{proof}
		
		We denote by $\Ass^\infty(\mathcal{I})$ the set of those primes $\p\in\textup{Spec}(R)$ such that $\p\in\Ass(I_{[k]})$ for infinitely many $k$. Using the notation in the above lemma, $\p\in\Ass^\infty(\mathcal{I})$ if and only if $\p\in\Ass(I_{[ck+j]})$ for some $j\in\{0,\dots,c-1\}$ and all $k\gg0$.\medskip
		
		\begin{Remark}
			\rm If the Rees algebra $\mathcal{R(I)}$ is standard graded, then in Lemma \ref{Lem:AssInftyFiltr} we have $c=1$ and we obtain that $\Ass(I_{[k]})=\Ass(I_{[k+1]})$ for all $k\gg0$.
		\end{Remark}
		
		A numerical function $f:\ZZ_{\ge0}\rightarrow\ZZ_{\ge0}$ is called a \textit{quasi-linear function} (\textit{of period $c$}) if there exist a positive integer $c$ and linear functions $f_i(k)=a_ik+b_i$, $i=0,\dots,c-1$, such that $f(k)=f_i(k)$ if $k\equiv i$ (mod $c$). The aim of this section is to prove:
		
		\begin{Theorem}\label{Thm:FS24}
			Let $\mathcal{I}=\{I_{[k]}\}_{k\ge0}$ be a Noetherian graded filtration of $R$. Then $\v(I_{[k]})$ is a quasi-linear function in $k$ for all $k\gg0$.
		\end{Theorem}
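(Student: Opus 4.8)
The plan is to reduce, via Lemma \ref{Lem:AssInftyFiltr}, to the asymptotic analysis of a single $\v_\p$ along an arithmetic progression, and then to read off $\v_\p$ as the initial degree of the graded components of a finitely generated module over the Veronese of the Rees algebra. First I would fix $c>0$ as in Lemma \ref{Lem:AssInftyFiltr}, so that for each $j\in\{0,\dots,c-1\}$ the set $A_j=\Ass(I_{[ck+j]})$ is independent of $k$ for $k\gg0$. Since $\v(I_{[ck+j]})=\min_{\p\in A_j}\v_\p(I_{[ck+j]})$ for $k\gg0$, and the pointwise minimum of finitely many functions that are each linear for $k\gg0$ is again linear for $k\gg0$ (its eventual slope is the smallest of the slopes, and among these the smallest intercept), it is enough to show that $k\mapsto\v_\p(I_{[ck+j]})$ is eventually linear for every fixed $\p\in A_j$; rewriting $m=ck+j$ then produces a quasi-linear function of $m$ of period $c$.

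Next I would reinterpret $\v_\p$ homologically. For a homogeneous ideal $I$ with $\p\in\Ass(I)$, the equality $(I:f)=\p$ means precisely that the image $\bar f\in R/I$ has annihilator $\p$, i.e. that $\bar f$ is a nonzero, non-torsion element of the $R/\p$-module $(0:_{R/I}\p)=(I:\p)/I$. Writing $W_I=(I:\p)/I$ and denoting by $\mathcal T_I$ its torsion submodule over the domain $R/\p$, we obtain
\[
\v_\p(I)=\indeg\big(W_I/\mathcal T_I\big),
\]
the least degree in which the torsion-free quotient is nonzero.

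The heart of the proof is to glue these quotients into one finitely generated module. Regard $\mathcal{R(I)}=\bigoplus_k I_{[k]}t^k\subseteq R[t]$ and put $A=\mathcal{R(I)}^{(c)}$, which is standard graded over $A_0=R$ by the choice of $c$. Condition (iii) gives $I_{[\ell]}(I_{[k]}:\p)\subseteq(I_{[k+\ell]}:\p)$, so $\mathcal N_{(j)}=\bigoplus_k(I_{[ck+j]}:\p)=(\mathcal{R(I)}:_{R[t]}\p)_{(j)}$ is an $A$-module containing $\mathcal{R(I)}_{(j)}$. Writing $\p=(g_1,\dots,g_s)$ with the $g_i$ homogeneous, multiplication by the $g_i$ defines an $A$-linear map $\mathcal N_{(j)}\to\mathcal{R(I)}_{(j)}^{\,s}$ with finitely generated image, because $\mathcal{R(I)}$ is Noetherian. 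When the $g_i$ are nonzerodivisors — in particular when $R$ is a domain, which already covers the principal case $R=S$ with $\p\neq0$ — this map is injective, so $\mathcal N_{(j)}$ is finitely generated, and hence so is the bigraded quotient $V_{(j)}=\bigoplus_k\big(W_{I_{[ck+j]}}/\mathcal T_{I_{[ck+j]}}\big)$, the $R/\p$-torsion being a graded $A$-submodule. Since $V_{(j)}$ is a finitely generated bigraded $A$-module and $A$ is standard in the first grading, the initial degree $\indeg\big((V_{(j)})_k\big)=\v_\p(I_{[ck+j]})$ of its components is eventually linear in $k$; this asymptotic linearity of initial degrees can be read off from the bigraded Hilbert function, or cited from the circle of ideas in \cite{Conca23,FS2}. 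Combined with the first paragraph, this proves the theorem in this case.

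I expect the main obstacle to lie exactly at the finite-generation step combined with the control of torsion. The naive ambient object $\bigoplus_k R/I_{[ck+j]}$ fails to be finitely generated over $A$, which forces the detour through the colon module $\mathcal N_{(j)}$ and its torsion-free quotient; moreover, for a general base ring $R$ one must in addition dispose of the kernel $\bigoplus_k(0:_R\p)$ of the multiplication map, which no longer vanishes. Verifying that passing to $W/\mathcal T$ simultaneously yields a finitely generated $A$-module and preserves $\v_\p$ as an initial degree is the delicate core; granting it, together with the eventual linearity of initial degrees, the quasi-linearity of $\v(I_{[k]})$ follows.
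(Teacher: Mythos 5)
Your proof follows the same skeleton as the paper's: fix $c$ as in Lemma \ref{Lem:AssInftyFiltr}, reduce to showing that $k\mapsto\v_\p(I_{[ck+j]})$ is eventually linear for each stable prime $\p$, encode these numbers as initial degrees of the graded components of a bigraded module over $A=\mathcal{R(I)}^{(c)}$, and conclude by eventual linearity of initial degrees of finitely generated bigraded modules. Within this skeleton you make two substitutions. Your description of $\v_\p(I)$ as $\indeg$ of the torsion-free quotient of $(I:\p)/I$ over the graded domain $R/\p$ is correct, and is a legitimate replacement for Conca's Lemma \ref{Lem:Conca}. Your finite-generation mechanism is genuinely different: you embed $\mathcal{N}_{(j)}=\bigoplus_k(I_{[ck+j]}:\p)$ into $\mathcal{R(I)}_{(j)}^{\,s}$ by multiplication with generators of $\p$, whereas the paper takes the colon \emph{inside} the Rees algebra (so its module $\Soc_\p^*$ is a subquotient of the Noetherian ring $\mathcal{R}(\mathcal{J})$ and finitely generated for free) and then pays for this with Lemmas \ref{Lem:FS24-1} and \ref{Lem:FS24-2} to identify its components for $k\gg0$. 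When every $\p\in\Ass^\infty(\mathcal{I})$ contains a nonzerodivisor --- in particular when $R$ is a domain --- your argument is complete and arguably cleaner than the paper's, since it needs neither the Ratliff-type lemma nor the ideal $\q^\infty$.

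The case you leave open is, however, a genuine gap, and it cannot be closed in the way you hope: the claim that $V_{(j)}$ is finitely generated is false when $(0:_R\p)\ne0$, not merely unverified. Take $R=K[u,v]/(u^2,uv)$, $I_{[k]}=(v)^k$, $\p=(u,v)$. Then $\p=((v^k):u)\in\Ass(I_{[k]})$ for all $k\ge1$, and $(0:_R\p)=(u)\ne0$. Here
$$
W_{I_{[k]}}\ =\ \frac{((v^k):\p)}{(v^k)}\ =\ \frac{(u)+(v^{k-1})}{(v^k)}
$$
is a $K$-vector space spanned by the classes of $u$ and $v^{k-1}$, hence torsion-free over $R/\p=K$, so $V_{(j)}$ is all of $\bigoplus_k W_{I_{[k]}}$. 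Since $I_{[1]}\subseteq\p$ (Lemma \ref{Lem:FS24-1}), we have $I_{[c\ell]}\cdot(0:_R\p)\subseteq\p\,(0:_R\p)=0$, so the class of $u$ is annihilated by $A_+$; therefore $V_{(j)}/A_+V_{(j)}$ is nonzero in bidegree $(1,k)$ for every $k$, and $V_{(j)}$ needs a new generator in every degree: it is not a finitely generated $A$-module (equivalently, $\mathcal{N}_{(j)}$ is not, its kernel part $\bigoplus_k(0:_R\p)$ carrying a trivial $A_+$-action). So the final step of your argument has nothing to apply to. Note that the theorem is still true in this example: $\v_\p(I_{[k]})=1$ for all $k$, witnessed precisely by the element $u$ that escapes your module. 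The zerodivisor case is thus not vacuous, and "granting" it assumes away a genuine part of the theorem.

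It is worth stressing that this difficulty is intrinsic and not an artifact of your particular gluing. In the same example the element $u$ lies in $(I_{[k+1]}:\p)$ but not in $I_{[k]}$, so it is also invisible to the paper's module $\Soc_\p^*$: the two sides of equation (\ref{eq:k+1Comp}) there are $(v^k)/(v^{k+1})$ and $\big((u)+(v^k)\big)/(v^{k+1})$, which differ. Any complete treatment of primes consisting of zerodivisors requires an additional idea --- for instance, a separate analysis of the bounded-degree contribution coming from elements annihilated by $\p$ modulo the filtration --- rather than a finiteness statement alone.
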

		
		We call the function $\v(I_{[k]})$ the \textit{$\v$-function} of the graded filtration $\mathcal{I}$.
		
		In order to prove the theorem, we need some auxiliary lemmata. We begin with the following result of Conca \cite[Lemma 1.2]{Conca23}. Let $M$ be a graded $R$-module. We denote by $\alpha(M)=\min\{d:M_d\ne0\}$ and $\omega(M)=\max\{d:(M/\m M)_d\ne0\}$ the \textit{initial degree} and the \textit{final degree} of $M$, respectively. 
		
		\begin{Lemma}\label{Lem:Conca}
			Let $I\subset R$ be a graded ideal, and let $\p\in\Ass(I)$ be an associated prime of $I$. Let $X_\p=\{\p_1\in\Ass(I):\p\subsetneq\p_1\}$ and $\q$ be the product of the elements in $X_\p$ if $X_\p$ is non-empty, otherwise let $\q=R$. Then,
			$$
			\v_\p(I)\ =\ \alpha\Big(\frac{(I:\p)}{I:(\p+\q^\infty)}\Big).
			$$
			Here $I:(\p+\q^\infty)=\bigcup_{k\ge0}(I:(\p+\q^k))=(I:\p)\cap(\bigcup_{k\ge0}I:\q^k)$.
		\end{Lemma}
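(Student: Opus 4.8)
The plan is to translate everything into the module $N=(I:\p)/I$ and to compare two submodules of it. Since $\p N=0$, the module $N$ is a finitely generated graded module over the \emph{domain} $R/\p$. For a homogeneous $f$ one has $(I:f)=\Ann_{R/I}(\bar f)$, and $f\in(I:\p)$ says exactly that $\bar f\in N$; thus $(I:f)=\p$ holds if and only if $\bar f$ is a nonzero element of $N$ whose annihilator is as small as possible, i.e. $\bar f$ lies outside the $(R/\p)$-torsion submodule $T=\{n\in N:\Ann_R(n)\not\subseteq\p\}$. First I would record this as $\v_\p(I)=\alpha(N/T)$: a homogeneous $\bar f$ with $\Ann_R(\bar f)=\p$ is precisely a homogeneous element of $N$ with nonzero image in $N/T$, so the least degree of such an $f$ equals $\alpha(N/T)$. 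One checks here that $\Ass(T)\subseteq X_\p$ while $\p\in\Ass(N)$, so that $T\subsetneq N$ and the minimum is attained.

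Next I would rewrite the right-hand side in the same language. As $\p+\q^\infty$ is generated by $\p$ together with $\q^\infty$, we have $I:(\p+\q^\infty)=(I:\p)\cap(I:\q^\infty)$, and dividing by $I$ this becomes $(0:_N\q^\infty)$, the $\q^\infty$-torsion of $N$. Hence $(I:\p)/(I:(\p+\q^\infty))=N/(0:_N\q^\infty)$, and the asserted identity is equivalent to
\[
\alpha(N/T)=\alpha\bigl(N/(0:_N\q^\infty)\bigr).
\]
So the whole proof reduces to comparing the initial degrees of the quotients of $N$ by $T$ and by $(0:_N\q^\infty)$.

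One inequality is straightforward. When $X_\p\neq\emptyset$, the ideal $\q=\prod_{\p_1\in X_\p}\p_1$ is not contained in the prime $\p$ (each factor strictly contains $\p$), and $\q\subseteq\q^\infty$, so $\q^\infty\not\subseteq\p$; therefore any $n$ killed by $\q^\infty$ is killed by an element outside $\p$, giving $(0:_N\q^\infty)\subseteq T$. (When $X_\p=\emptyset$ one has $\q=\q^\infty=R$, and both $T$ and $(0:_N\q^\infty)$ vanish, so there is nothing to prove.) The inclusion $(0:_N\q^\infty)\subseteq T$ yields a graded surjection $N/(0:_N\q^\infty)\twoheadrightarrow N/T$, hence $\alpha(N/(0:_N\q^\infty))\le\alpha(N/T)$; equivalently $\v_\p(I)\ge\alpha(N/(0:_N\q^\infty))$, which one also sees directly since $(I:f)=\p$ forces $\q^\infty f\not\subseteq I$, i.e. $\bar f\notin(0:_N\q^\infty)$.

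The reverse inequality $\alpha(N/T)\le\alpha(N/(0:_N\q^\infty))$ is the heart of the matter, and I expect it to be the main obstacle. Writing $d=\alpha(N/(0:_N\q^\infty))$, I must exhibit a torsion-free element of $N$ of degree $d$, i.e. prove $N_d\not\subseteq T$. The key structural input is that $\q^\infty$ is built to contain, after $\m$-saturation, every associated prime of $T$ other than $\m$: since $\Ass(T)\subseteq X_\p$ and each $\p_1\in X_\p$ with $\p_1\neq\m$ is $\m$-saturated, one gets $\q^\infty\subseteq\p_1$ for all such $\p_1$, and a high power of $\q^\infty$ carries $N$ into the torsion-free quotient $N/T$; localizing away from $\m$ this shows that $(0:_N\q^\infty)$ and $T$ agree, so the two submodules can differ only in an $\m$-primary (finite-length) part. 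The $\m$-saturation in the definition of $\q^\infty$ is exactly what discounts that finite-length discrepancy, and I would use the hypothesis that $R/\m$ is infinite to pass from ``$\q^\infty$ does not annihilate $N_d$'' to ``a general element of $N_d$ is torsion-free'', so that a torsion-free element already occurs in degree $d$. Combining the two inequalities then gives $\v_\p(I)=\alpha(N/T)=\alpha(N/(0:_N\q^\infty))$, as claimed; the delicate point throughout is the interaction between the embedded associated primes above $\p$ and the homogeneous initial degree, which is precisely what the colon by $\q^\infty$ is designed to control.
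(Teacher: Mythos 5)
The paper itself contains no proof of this lemma --- it is quoted verbatim from Conca \cite[Lemma 1.2]{Conca23} --- so your attempt can only be measured against the statement, and there the news is bad. Your first half is correct and is the right frame: $\v_\p(I)=\alpha(N/T)$ for $N=(I:\p)/I$ and $T$ the $(R/\p)$-torsion submodule, and $(0:_N\q^\infty)\subseteq T$ gives $\alpha\big(N/(0:_N\q^\infty)\big)\le\alpha(N/T)$. But the reverse inequality, which you correctly flag as the heart of the matter, cannot be proved, because with the parenthetical definition $\q^\infty=\bigcup_{k\ge0}(\q:\m^k)$ taken literally the statement is false. Take $R=K[x,y]$, $I=(x^2,xy^3)=(x)\cap(x^2,y^3)$, $\p=(x)$. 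Then $X_\p=\{\m\}$, $\q=\m$, and since $\m:\m=R$ one has $\q^\infty=R$, so the right-hand side is $\alpha\big((I:\p)/I\big)=\alpha\big((x,y^3)/(x^2,xy^3)\big)=1$; but $\v_\p(I)=3$: any witness lies in $(I:\p)=(x,y^3)$, whose elements of degree at most $2$ are multiples of $x$, with $(I:x)=(x,y^3)\ne\p$ and $y^2\in\big(I:x(ax+by)\big)$. Here $T=(x)/I\ne0=(0:_N\q^\infty)$: the discrepancy is exactly the finite-length piece supported at $\m$ that you propose to ``discount'', and it does shift the initial degree; no genericity argument from $|K|=\infty$ can rescue this, since \emph{every} element of $N_1$ is torsion. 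There is a second, independent defect: $(0:_N\q^\infty)$ is a single colon, while torsion is killed only by \emph{powers}. For $I=(x^2,xy^2)\subset K[x,y,z]$ and $\p=(x)$ one has $\q=\q^\infty=(x,y)$ (already $\m$-saturated), and $\bar x\in T$ because $\q^2\bar x=0$, yet $\q\bar x\ne0$ since $xy\notin I$; so $T$ and $(0:_N\q^\infty)$ differ even after localizing at the prime $(x,y)\ne\m$, refuting your claim that they agree away from $\m$ (you conflate being killed by $\q^\infty$ with being killed by a power of $\q^\infty$), and again the literal right-hand side is $1$ while $\v_\p(I)=2$.

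The statement you should be proving --- and what the citation to Conca intends --- interprets the colon as
$$
I:(\p+\q^\infty)\ =\ \bigcup_{k\ge0}\big(I:(\p+\q^k)\big)\ =\ (I:\p)\cap\bigcup_{k\ge0}(I:\q^k),
$$
that is, the saturation of $I$ by powers of $\q$, not the $\m$-saturation of the ideal $\q$; the parenthetical in the printed statement is a misprint. With this reading your own framework closes at once, and more strongly than you anticipate: if $0\ne n\in T$, then every prime in $\Ass(Rn)$ lies in $\Ass(R/I)$ (as $Rn\subseteq R/I$) and contains $\Ann(n)$, which strictly contains $\p$ because $n$ is torsion and $\p N=0$; hence $\Ass(Rn)\subseteq X_\p$, so $\q\subseteq\sqrt{\Ann(n)}$ and $\q^kn=0$ for some $k$. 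Combined with your easy inclusion (applied to each $\q^k$, using $\q\not\subseteq\p$, with the case $X_\p=\emptyset$ trivial as you note), this yields the equality of graded submodules $T=\bigcup_{k\ge0}(0:_N\q^k)$. The two quotients are then literally the same module, so there is no comparison of initial degrees to make, and neither the infinite-field hypothesis nor any finite-length bookkeeping enters the argument at all.
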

		
		A crucial property which we need for the proof of Theorem \ref{Thm:FS24} is an analogue of the so-called \textit{Ratliff property} which assures that we have $$(I^{k+1}:I)=I^k$$ for all $k\gg0$, if $I$ is an ideal of a Noetherian domain $R$ \cite[Corollary 4.2]{R79}. This property was pivotal in the proofs of \cite[Theorem 1.1]{Conca23} and \cite[Theorem 3.1]{FS2}. The following result is probably well-known in the context of graded filtrations, but due to the lack of a suitable reference, we provide a proof.
		\begin{Lemma}\label{Lem:FS24-2}
			Let $\mathcal{I}=\{I_{[k]}\}_{k\ge0}$ be a Noetherian graded filtration of $R$. Let $c>0$ be the smallest integer such that $\mathcal{R}(\mathcal{I})^{(c)}$ is standard graded. Then
			$$
			I_{[c(k+1)+j]}:I_{[c]}\ =\ I_{[ck+j]}
			$$
			for all $k\gg0$ and all $j\in\{0,\dots,c-1\}$.
		\end{Lemma}
		
		This result is a consequence of the following more general statement.
		\begin{Proposition}\label{Prop:known}
			Let $(R,\m,K)$ be either a Noetherian local ring or a finitely generated graded $K$-algebra. Let $M$ be a finitely generated $R$-module and let $I\subset R$ be an ideal. Suppose that $K$ is an infinite field and that $I$ contains a non-zero divisor on $M$. Then,
			$$
			I^{k+1}M:_M I\ =\ I^kM
			$$
			for all $k\gg0$.
		\end{Proposition}
		\begin{proof}
			The result is probably well-known to the experts, but since we could not find a precise reference in the literature, we provide a short proof. We prove it in the case that $R$ is a Noetherian local ring, the case when $R$ is a finitely generated graded $K$-algebra is analogous.
			
			By assumption, $I$ contains a non-zero divisor on $M$. Thus, \cite[Corollary 8.5.9]{HS2006} guarantees that there exists $x\in I$ which is both a non-zero divisor on $M$ and is a superficial element of $I$ with respect to $M$. Next, \cite[Lemma 8.5.3]{HS2006} implies that $I^{k+1}M:_M x=I^kM$ for all $k\gg0$. From $x\in I$, we obtain that $$I^{k+1}M:_MI\ \subseteq\ I^{k+1}M:_Mx\ =\ I^kM$$ for all $k\gg0$. Since the opposite inclusion trivially holds for all $k$, we conclude that $I^{k+1}M:_MI=I^kM$ for all $k\gg0$.
		\end{proof}
		We are now ready to prove Lemma \ref{Lem:FS24-2}.
		\begin{proof}[Proof of Lemma \ref{Lem:FS24-2}]
			Let $c>0$ be the smallest integer such that $\mathcal{R}(\mathcal{I})^{(c)}$ is standard graded. Hence $I_{[ck]}=I_{[c]}^k$ for all $k\ge0$. From the proof of \cite[Proposition 2.7(2)]{HNN} there exists a big enough integer $N$ such that $I_{[c\ell+j]}=I_{[cN+j]}I_{[c]}^{\ell-N}$ for all $\ell\ge N$ and all $j\in\{0,\dots,c-1\}$. Setting $\ell-N=k$, we obtain that
			\begin{equation}\label{eq:Ha}
				I_{[c(k+N)+j]}=I_{[cN+j]}I_{[c]}^k,\quad\textup{for all}\ j\in\{0,\dots,c-1\}\ \textup{and all}\ k\ge0.
			\end{equation}
			
			Taking $M=R$ and $I=I_{[c]}$, since $R$ is a domain, Proposition \ref{Prop:known} implies that
			\begin{equation}\label{eq:0}
				I_{[c]}^{k+1}M:_MI_{[c]}\ =\ I_{[c(k+1)]}:_RI_{[c]}\ =\ I_{[ck]}\ =\ I_{[c]}^kM,\quad\textup{for all}\ k\gg0.
			\end{equation}
			
			Next, fix an integer $0<j\le c-1$ and let $M=I_{[cN+j]}$. Then $M$ is a finitely generated $R$-module, and since $R$ is a domain any non-zero element of $I_{[c]}$ is a non-zero divisor on $M$. Hence, Proposition \ref{Prop:known} implies that
			$$
			I_{[c]}^{k+1}M:_{M}I_{[c]}=I_{[c]}^{k}M,\quad\ \textup{for all}\ k\gg0.
			$$
			By (\ref{eq:Ha}) we have $I_{[c]}^{k+1}M=I_{[c(k+N+1)+j]}$ and $I_{[c]}^{k}M=I_{[c(k+N)+j]}$ for all $k\ge0$. Hence
			\begin{equation}\label{eq:2}
				I_{[c]}^{k+1}M:_{M}I_{[c]}=(I_{[c(k+N+1)+j]}:I_{[c]})\cap I_{[cN+j]}=I_{[c(k+N)+j]}
			\end{equation}
			for all $k\gg0$, where the second colon ideal appearing in the above formula is taken in $R$. Using (\ref{eq:0}), for all $k\gg0$ we have
			\begin{equation}\label{eq:1}
				I_{[c(k+N+1)+j]}:I_{[c]}\ \subseteq\ I_{[c(k+N+1)]}:I_{[c]}\ =\ I_{[c(k+N)]}\ \subseteq I_{[cN+j]}.
			\end{equation}
			From this inclusion and equation (\ref{eq:2}) we have $I_{[c(k+1)+j]}:I_{[c]}=I_{[ck+j]}$ for all $k\gg0$. The assertion follows.
		\end{proof}
		
		For a bigraded module $M=\bigoplus_{d,k\ge0}M_{(d,k)}$, we set $M_{(*,k)}=\bigoplus_{d\ge0}M_{(d,k)}$.
		\begin{proof}[Proof of Theorem \ref{Thm:FS24}]
			As in the proof of Lemma \ref{Lem:AssInftyFiltr}, let $c>0$ be the smallest positive integer such that $\mathcal{R}=\mathcal{R(I)}^{(c)}=\bigoplus_{k\ge0}I_{[ck]}$ is standard graded. Lemma \ref{Lem:AssInftyFiltr} implies that for each $j\in\{0,\dots,c-1\}$,
			$$
			\Ass(I_{[ck+j]})\ =\ \Ass(I_{[c(k+1)+j]})
			$$
			for all $k\gg0$. For each $j\in\{0,\dots,c-1\}$, we denote by $\Ass^\infty(\mathcal{I})_j$ the set of those prime ideals $\p\in\Spec(R)$ such that $\p\in\Ass(I_{[ck+j]})$ for all $k\gg0$. Hence, for each $j\in\{0,\dots,c-1\}$, and all $k\gg0$,
			$$
			\v(I_{[ck+j]})\ =\ \min_{\p\in\Ass^\infty(\mathcal{I})_j}\v_\p(I_{[ck+j]}).
			$$
			Thus to prove the theorem, it is enough to show that $\v_\p(I_{[ck+j]})$ is an eventually linear function in $k$, for all $\p\in\Ass^\infty(\mathcal{I})_j$ and all $j\in\{0,\dots,c-1\}$.
			
			Fix $j\in\{0,\dots,c-1\}$. Let $\p\in\Ass^\infty(\mathcal{I})_j$. Then $\p\in\Ass(I_{[ck+j]})$ for all $k\gg0$. Let $\q$ be the product of the elements in the set $X_\p=\{\p_1\in\Ass^\infty(\mathcal{I})_j:\p\subsetneq\p_1\}$ if $X_\p$ is non-empty, otherwise let $\q=R$. Taking into account formula (\ref{eq:Ha}), let $\mathcal{R}'=I_{[cN+j]}\mathcal{R}=\bigoplus_{k\ge0}I_{[c(k+N)+j]}$, that is the extension of $I_{[cN+j]}$ in the Noetherian ring $\mathcal{R}=\mathcal{R}(\mathcal{I})^{(c)}$. Consider the module
			\begin{equation}\label{eq:soc}
				\mathcal{B}\ =\ \frac{(\mathcal{R}'\ :_{\mathcal{R}}\ \p\mathcal{R})}{(\mathcal{R}'\ :_{\mathcal{R}}\ (\p+\q^\infty)\mathcal{R})}.
			\end{equation}
			Notice that $ \mathcal{B}$ is a finitely generated bigraded $\mathcal{R}$-module. By formula (\ref{eq:Ha}), we have $I_{[cN+j]}I_{[ck]}=I_{[c(k+N)+j]}$ for all $k\ge0$. Hence
			$$
			\mathcal{B}\ =\ \bigoplus_{k\ge0}\frac{(I_{[c(k+N+1)+j]}:\pp)\cap I_{[ck]}}{(I_{[c(k+N+1)+j]}:(\p+\q^\infty))\cap I_{[ck]}}.
			$$
			By Lemma \ref{Lem:FS24-2}, we have $(I_{[c(k+1)+j]}:I_{[c]})=I_{[ck+j]}$ for all $k\gg0$. We claim that $I_{[c]}\subseteq\p$. Indeed, $I_{[ck_*+j]}\subseteq\p$ for a big enough integer $k_*$. Since $c(k_*+1)>ck_*+j$ we have $I_{[c]}^{k_*+1}=I_{[c(k_*+1)]}\subseteq I_{[ck_*+j]}\subseteq\p$. Hence $I_{[c]}\subseteq\p$ because $\p$ is prime. It follows that $(I_{[c(k+N+1)+j]}:\pp)\subseteq(I_{[c(k+N+1)]}:I_{[c]})=I_{[c(k+N)]}\subseteq I_{[ck]}$ for all $k\gg0$ and likewise $(I_{[c(k+1)+j]}:(\p+\q^\infty))\subseteq I_{[ck]}$ for all $k\gg0$. Hence
			\begin{equation}\label{eq:k+1Comp}
				\mathcal{B}_{(*,k)}\ =\ \frac{(I_{[c(k+N+1)+j]}:\pp)}{I_{[c(k+N+1)+j]}:(\p+\q^\infty)}.
			\end{equation}
			for all $k\gg0$. Applying Lemma \ref{Lem:Conca} we then deduce that
			\begin{equation}\label{eq:vpalpha}
				\v_\p(I_{[ck+j]})\ =\ \alpha( \mathcal{B}_{(*,k-N-1)})
			\end{equation}
			for all $k\gg0$. Since $ \mathcal{B}_{(*,k-N-1)}$ is a finitely generated $\mathcal{R(I)}^{(c)}$-module and $\mathcal{R(I)}^{(c)}$ is standard graded, equation (\ref{eq:vpalpha}) and variation of \cite[Theorem 8.3.4]{BCRV22} (see also \cite[Proposition 2.5]{FS2}) imply that $\v_\p(I_{[ck+j]})$ is a linear function in $k$ for all $k\gg0$.
		\end{proof}
		
		Keeping the notation of the previous proof, fix an integer $j\in\{0,\dots,c-1\}$. Let $\p\in\Ass^\infty(\mathcal{I})_j$ and let $\mathcal{B}$ as defined in equation (\ref{eq:soc}). It follows from the proof of Theorem \ref{Thm:FS24} that $\mathcal{B}_{(*,k+1)}=I_{[c]}\mathcal{B}_{(*,k)}$ for all $k\gg0$. Hence, we obtain the following consequences of Theorem \ref{Thm:FS24} and equation (\ref{eq:k+1Comp}). The reader may confront this property with \cite[Propositions 2.7 and 3.2]{FS2}.
		\begin{Corollary}
			Under the same assumptions and notation of Theorem \ref{Thm:FS24}, for any $\p\in\Ass^\infty(\mathcal{I})$, there exist $c>0$ such that $(I_{[c(k+1)+j]}:\p)\ =\ I_{[c]}(I_{[ck+j]}:\p)$ for all $k\gg0$ and all $j\in\{0,\dots,c-1\}$. In particular,
			$$
			\lim_{k\rightarrow\infty}\frac{\v_\p(I_{[ck+j]})}{k}\ \in\ \{d:\ (I_{[c]}/\mm I_{[c]})_d\ne0\}.
			$$
		\end{Corollary}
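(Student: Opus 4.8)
The plan is to read the module relation stated just above, $\Soc_\p^*(\mathcal{I})_{(*,c(k+1)+j)}=I_{[c]}\Soc_\p^*(\mathcal{I})_{(*,ck+j)}$, back through equation (\ref{eq:k+1Comp}) to the honest colon ideals, and then to feed the result into (\ref{eq:vpalpha}). First I would fix $c$ so that $\mathcal{R(I)}^{(c)}$ is standard graded and fix a residue $j$ with $\p\in\Ass(I_{[ck+j]})$ for all $k\gg0$; by Theorem \ref{Thm:FS24} it suffices to work with one $\p\in\Ass^\infty(\mathcal{I})$. Writing $A_m=(I_{[m]}:\p)$ and $B_m=(I_{[m]}:(\p+\q^\infty))$, property (iii) of the filtration shows that $\bigoplus_m A_m$ and $\bigoplus_m B_m$ are graded $\mathcal{R(I)}$-modules (if $r\in I_{[\ell]}$ and $x\in A_m$ then $rx\p\subseteq I_{[\ell]}I_{[m]}\subseteq I_{[\ell+m]}$), so over the standard graded ring $\mathcal{R(I)}^{(c)}$ they break into residue strands indexed by $j$, to which I can apply the usual finitely generated graded module machinery.

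I would then prove the colon identity $A_{c(k+1)+j}=I_{[c]}A_{ck+j}$ for $k\gg0$, which is the first displayed formula. The inclusion $\supseteq$ is immediate from property (iii). For $\subseteq$ I would combine three facts. Equation (\ref{eq:k+1Comp}) identifies $\Soc_\p^*(\mathcal{I})_{(*,m)}$ with $A_{m+1}/B_{m+1}$, so the $\Soc$-relation above gives, after re-indexing the residue by one, $A_{c(k+1)+j}=I_{[c]}A_{ck+j}+B_{c(k+1)+j}$. Next, the numerator in (\ref{eq:soc}) is a genuine homogeneous ideal of the Noetherian ring $\mathcal{R(I)}$, hence finitely generated; its residue strands therefore satisfy the standard relation $M_{s+1}=(\mathcal{R(I)}^{(c)})_1M_s$ for $s\gg0$. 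Finally, if the analogous relation $B_{c(k+1)+j}=I_{[c]}B_{ck+j}$ holds for the denominator, then
$$A_{c(k+1)+j}=I_{[c]}A_{ck+j}+I_{[c]}B_{ck+j}=I_{[c]}(A_{ck+j}+B_{ck+j})=I_{[c]}A_{ck+j},$$
the last step again by (\ref{eq:k+1Comp}).

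The hard part is the denominator relation for $B$. Here $B_m$ is a colon by the larger ideal $\p+\q^\infty$, with $\q$ the product of the primes of $\Ass^\infty(\mathcal{I})$ strictly above $\p$, and the finite generation of $\bigoplus_m B_m$ over $\mathcal{R(I)}$ is not automatic. I would establish it by downward Noetherian induction on the finite poset $\Ass^\infty(\mathcal{I})$. In the base case $\p$ is maximal in $\Ass^\infty(\mathcal{I})$, so $\q^\infty=R$ and $B_m=I_{[m]}$; then $B_{c(k+1)+j}=I_{[c]}B_{ck+j}$ becomes $I_{[M+c]}=I_{[c]}I_{[M]}$ for $M\gg0$, which holds because each strand $\bigoplus_k I_{[ck+r]}$ is a finitely generated $\mathcal{R(I)}^{(c)}$-module (as in the proof of Lemma \ref{Lem:AssInftyFiltr}). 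For the inductive step I would rewrite $(I_{[m]}:(\p+\q^\infty))$ in terms of colons by the primes lying strictly above $\p$, for which the strand relation is already known, and transfer it to $B$.

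It remains to deduce the limit formula, which is then the ``in particular''. Using (\ref{eq:vpalpha}), $\v_\p(I_{[ck+j]})=\alpha(\Soc_\p^*(\mathcal{I})_{(*,ck+j-1)})$, I would apply the initial degree theorem \cite[Theorem 8.3.4]{BCRV22} (see also \cite[Proposition 2.5]{FS2}) to the finitely generated bigraded $\mathcal{R(I)}^{(c)}$-module $\Soc_\p^*(\mathcal{I})$. Since the degree one part of $\mathcal{R(I)}^{(c)}$ is $I_{[c]}$, that theorem makes $k\mapsto\alpha(\Soc_\p^*(\mathcal{I})_{(*,ck+j-1)})$ eventually linear with slope in the set of internal degrees of a minimal generating set of $I_{[c]}$, that is, in $\{d:(I_{[c]}/\m I_{[c]})_d\ne0\}$. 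Dividing by $k$ and letting $k\to\infty$ yields the asserted membership. I expect the three facts feeding the colon identity and the final limit computation to be routine, while the denominator relation for $B$ is the step that genuinely requires care.
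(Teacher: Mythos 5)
Your proposal follows the same route as the paper's own (very terse) proof: finite generation of the socle/colon modules over the standard graded Veronese $\mathcal{R(I)}^{(c)}$, the standard fact that a finitely generated graded module $M$ over a standard graded ring $T$ satisfies $M_{k+1}=T_1M_k$ for $k\gg0$, and then equations (\ref{eq:k+1Comp}) and (\ref{eq:vpalpha}) together with \cite[Theorem 8.3.4]{BCRV22} to extract the colon identity and the slope. You are in fact more scrupulous than the paper at the decisive point: the paper passes from $\Soc_\p^*(\mathcal{I})_{(*,c(k+1)+j)}=I_{[c]}\Soc_\p^*(\mathcal{I})_{(*,ck+j)}$ to the colon identity in a single word, whereas you correctly observe that, read through (\ref{eq:k+1Comp}), this relation only yields $A_{c(k+1)+j}=I_{[c]}A_{ck+j}+B_{c(k+1)+j}$, with $A_m=(I_{[m]}:\p)$ and $B_m=(I_{[m]}:(\p+\q^\infty))$, so the denominator term must still be absorbed.

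That absorption is exactly where your proposal has a genuine gap. The base case of your Noetherian induction is fine, but the inductive step is never carried out, and as described it cannot be: the induction hypothesis concerns the colons $(I_{[m]}:\p_1)$ by the \emph{individual} primes $\p_1\supsetneq\p$ (each a priori with its own residue $j_1$ and threshold), while $B_m=(I_{[m]}:\p)\cap(I_{[m]}:\q^\infty)$ is built from them by intersection and by a colon with the $\m$-saturation of a product, and none of these operations preserves the strand property: in general one only has $I_{[c]}(M_k\cap N_k)\subseteq I_{[c]}M_k\cap I_{[c]}N_k$, with strict inclusion possible, so ``transfer it to $B$'' has no argument behind it. You also missed the cheaper observation that the denominator of (\ref{eq:soc}) is, exactly like the numerator, a colon ideal of the Noetherian ring $\mathcal{R(I)}$, hence finitely generated; but that only gives the strand relation for $(I_{[m+1]}:(\p+\q^\infty))\cap I_{[m]}$, and removing the intersection ``$\cap I_{[m]}$'' is the whole difficulty, since Lemma \ref{Lem:FS24-2} controls colons only after intersecting with the fixed ideal $I_{[t]}$.

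Worse, the gap cannot be closed from the ingredients you allow yourself, because in the paper's stated generality the Corollary is actually false. Take $R=K[x,y]/(xy)$ and $I_{[k]}=(x^k)$, a Noetherian filtration whose Rees algebra $R[xt]$ is standard graded, and $\p=(x)$; then $((x^k):y)=(x)$ for all $k\ge1$, so $\p\in\Ass^\infty(\mathcal{I})$ and $\v_\p(I_{[m]})=1$ for all $m$. Here $(I_{[m]}:\p)=(x^{m-1},y)$, so for every choice of $c$ and $j$ one has $(I_{[c(k+1)+j]}:\p)=(x^{c(k+1)+j-1},y)$, while $I_{[c]}(I_{[ck+j]}:\p)=(x^c)(x^{ck+j-1},y)=(x^{c(k+1)+j-1})$ because $x^cy=0$; moreover the limit in the second display is $0$, which does not lie in $\{d:(I_{[c]}/\m I_{[c]})_d\ne0\}=\{c\}$. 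So both assertions fail, and the failure traces back to equation (\ref{eq:k+1Comp}) itself, on which both you and the paper rely: its derivation silently drops the intersections ``$\cap J_{[k]}$'', and this example shows they cannot be dropped. A correct proof requires extra input guaranteeing $(I_{[k+1]}:\p)\subseteq I_{[k]}$ for $k\gg0$ (as happens for ordinary powers of an ideal in a domain, via Ratliff's theorem, which is the setting of \cite{Conca23,FS2}); granted that, the denominator relation you worried about follows by the same one-line finite-generation argument as the numerator, and no induction is needed.
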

		\begin{proof}
			Keeping the notation as in the proof of Theorem \ref{Thm:FS24}, fix $j\in\{0,\dots,c-1\}$ and let $\mathcal{B}$ be the module defined in equation (\ref{eq:soc}). Then equation (\ref{eq:vpalpha}) holds and so $\v(I_{[ck+j]})=\alpha(\mathcal{B}_{(*,k-N-1)})$ for all $k\gg0$. Since $\mathcal{B}_{(*,k+1)}=I_{[c]}\mathcal{B}_{(*,k)}$ for all $k\gg0$, it follows that the leading coefficient of the asymptotic linear function $\alpha(\mathcal{B}_{(*,k)})$ lies in the set $\{d:(I_{[c]}/\mm I_{[c]})_d\ne0\}$ and this concludes the proof.
		\end{proof}
		
		\begin{Remark}
			\rm In this section, we focused on the $\v$-function $\v(I_{[k]})$, and proved its asymptotic quasi-linearity. This result can be generalized, with the obvious modifications, to show that the $\v$-functions $\v(I_{[k]}M/I_{[k+1]}N)$ and $\v(M/I_{[k+1]}N)$ are quasi-linear for $k\gg0$, where $\mathcal{I}=\{I_{[k]}\}_{k\ge0}$ is a Noetherian graded filtration, $M$ is a finitely generated graded $R$-module, and $N$ is a graded submodule of $M$. See also the papers \cite{Fior24} and \cite{Ghosh24}.
		\end{Remark}
		
		\section{When $\p\in\Ass^\infty(\mathcal{I})$?}\label{Sec2-FS24}
		
		Let $\mathcal{I}=\{I_{[k]}\}_{k\ge0}$ be a Noetherian graded filtration of $R$. Given $\p\in\Spec(R)$, when do we have that $\p\in\Ass^\infty(\mathcal{I})$? The next theorem answers this question. For the proof of this result, we recall some basic facts.\smallskip
		
		Let $(R,\m,K)$ be a Noetherian local ring, $I\subset R$ be an ideal, and $M$ be a finitely generated $R$-module. Then, all minimal system of generators of $I$ have the same lenght, namely $\mu(I)=\dim_K (I/\m I)$. Let $\mu(I)=m$, ${\bf f}:f_1,\dots,f_m$ be a minimal system of generators of $I$, and let $H_i({\bf f};M)$ be the $i$th Koszul homology of ${\bf f}$ with respect to $M$. Then \cite[Proposition 1.6.21]{BH} implies that if ${\bf g}:g_1,\dots,g_m$ is another minimal system of generators of $I$, we have $H_i({\bf f};M)\cong H_i({\bf g};M)$ for all $i$. In other words, the Koszul homology modules do not depend upon the choice of a minimal system of generators of $I$. Therefore, we write $H_i(I;M)$ to denote $H_i({\bf f};M)$, where ${\bf f}$ can be any minimal system of generators of $I$.
		\begin{Theorem}\label{Thm:pinAss8(I)}
			Under the assumptions and notation of Theorem \ref{Thm:FS24}, for a prime ideal $\p\in\Spec(R)$, the following conditions are equivalent:
			\begin{enumerate}
				\item[\textup{(a)}] $\p\in\Ass^\infty(\mathcal{I})$.
				\item[\textup{(b)}] $\p\in\Ass(I_{[k]})$ for infinitely many $k$.
				\item[\textup{(c)}] The Krull dimension $\dim H_{\mu(\p R_\p)-1}(\p R_\p\,;\,\mathcal{R}(\mathcal{I})_\p)$ is positive.
			\end{enumerate}
		\end{Theorem}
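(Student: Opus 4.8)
The equivalence of (a) and (b) is immediate from the definition of $\Ass^\infty(\mathcal{I})$, so the real content is (b)$\iff$(c). The plan is to localize at $\p$ and translate everything into depth/Koszul data. Writing $(A,\m_A,\kappa)=(R_\p,\p R_\p,\kappa(\p))$ and $N_k=(I_{[k]})_\p$, one has $\p\in\Ass(R/I_{[k]})$ iff $\depth_A(A/N_k)=0$, i.e. iff the top Koszul homology $H_{\mu}(\p R_\p;A/N_k)=(N_k:_A\m_A)/N_k$, the socle of $A/N_k$, is nonzero, where $\mu=\mu(\p R_\p)$. Using the Veronese reduction of Lemma~\ref{Lem:AssInftyFiltr} exactly as in the proof of Theorem~\ref{Thm:FS24}, I would reduce to the case where $\mathcal{R}(\mathcal{I})$ is standard graded, so that $\Ass(R/I_{[k]})$ is constant for $k\gg0$ and $N_k=((I_{[1]})_\p)^k$; the general statement then follows by running the argument on each residue class.

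For the interpretation of (c), note that $H_{\mu-1}(\p R_\p;\mathcal{R}(\mathcal{I})_\p)=\bigoplus_{k\ge0}H_{\mu-1}(\p R_\p;N_k)$ is a finitely generated graded $\mathcal{R}(\mathcal{I})_\p$-module annihilated by $\p R_\p$ (Koszul homology is killed by the generating ideal), hence a finitely generated graded module over the fibre cone $F=\mathcal{R}(\mathcal{I})_\p/\p\mathcal{R}(\mathcal{I})_\p$. For such a module the Krull dimension is positive iff its Hilbert function does not eventually vanish; thus (c) is equivalent to the assertion that $H_{\mu-1}(\p R_\p;N_k)\neq0$ for infinitely many $k$. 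The theorem therefore reduces to comparing the nonvanishing loci of the socle modules $H_{\mu}(\p R_\p;A/N_k)$ and of $H_{\mu-1}(\p R_\p;N_k)$.

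The bridge between these is the long exact Koszul sequence attached to $0\to N_k\to A\to A/N_k\to0$,
\[
\cdots\to H_\mu(\p R_\p;A)\xrightarrow{g_k}H_\mu(\p R_\p;A/N_k)\xrightarrow{\partial_k}H_{\mu-1}(\p R_\p;N_k)\xrightarrow{h_k}H_{\mu-1}(\p R_\p;A)\to\cdots,
\]
in which the outer terms $H_i(\p R_\p;A)$ are independent of $k$ and nonzero exactly when $\depth A\le\mu-i$. These sequences assemble into an exact sequence of graded $F$-modules. The decisive observation is that, since $I_{[1]}\subseteq\p$ by Lemma~\ref{Lem:FS24-1}, every positive-degree coefficient of $\mathcal{R}(\mathcal{I})_\p$ lies in $\p R_\p$ and hence annihilates each $H_i(\p R_\p;A)$; thus the constant modules $\bigoplus_k H_i(\p R_\p;A)$ are killed by $F_+$. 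This yields the clean direction (c)$\Rightarrow$(b): if $M=H_{\mu-1}(\p R_\p;\mathcal{R}(\mathcal{I})_\p)$ has positive dimension then $F_+M$ coincides with $M$ in all large degrees, while $h(F_+M)\subseteq F_+\cdot H_{\mu-1}(\p R_\p;A)=0$, so $F_+M\subseteq\ker h=\operatorname{im}\partial$; hence $\operatorname{im}\partial_k\neq0$ for $k\gg0$, forcing $H_\mu(\p R_\p;A/N_k)\neq0$, i.e. $\p\in\Ass(R/I_{[k]})$.

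The main obstacle is the reverse implication (b)$\Rightarrow$(c) in low depth ($\depth A\le1$), where the fixed terms $H_\mu(\p R_\p;A)$ and $H_{\mu-1}(\p R_\p;A)$ are nonzero and the connecting maps could in principle route the entire socle of $A/N_k$ through them, leaving $H_{\mu-1}(\p R_\p;N_k)$ zero. To close this case I would invoke Krull's intersection theorem: because the Rees algebra is standard graded, $N_k\subseteq\m_A^k$, so $\bigcap_kN_k=0$ and therefore $\soc(A)\cap N_k=0$ for $k\gg0$; this pins down $\operatorname{im}g_k$ and shows that a socle of $A/N_k$ persisting stably cannot come entirely from $\soc(A)$, so it must create a nonzero class in $H_{\mu-1}(\p R_\p;N_k)$ via $\partial_k$. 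The genuinely degenerate situations, for instance when $(I_{[1]})_\p$ is nilpotent so that the filtration is eventually zero locally, must be inspected by hand; it is here that the finite generation of $M$ over the fibre cone has to be combined with the explicit socle computation above.
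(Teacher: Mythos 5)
Your overall strategy is the same as the paper's: localize at $\p$, characterize $\p\in\Ass(I_{[k]})$ by $\soc\big(R_\p/(I_{[k]})_\p\big)\neq 0$, regard $H_{\mu(\p R_\p)-1}(\p R_\p;\mathcal{R}(\mathcal{I})_\p)=\bigoplus_k H_{\mu(\p R_\p)-1}(\p R_\p;(I_{[k]})_\p)$ as a finitely generated graded module over (the fibre cone of) $\mathcal{R}(\mathcal{I})_\p$, and equate positive Krull dimension with non-vanishing of infinitely many graded components. The one place you diverge is instructive: the paper flatly asserts the identification $H_{\mu(\p R_\p)-1}(\p R_\p;(I_{[k]})_\p)=\big((I_{[k]})_\p:\p R_\p\big)/(I_{[k]})_\p$, whereas you correctly recognize that this is a long-exact-sequence comparison which needs $H_{\mu}(\p R_\p;R_\p)=H_{\mu-1}(\p R_\p;R_\p)=0$, i.e. $\depth R_\p\ge 2$. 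When $\depth R_\p\ge 2$ your argument is complete and agrees with the paper's; the Veronese reduction you import from Theorem \ref{Thm:FS24} is inessential for this statement, but harmless.

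Your two devices for low depth, however, both fail, and the gaps are genuine. First, in (c)$\Rightarrow$(b) you invoke Lemma \ref{Lem:FS24-1} to get $I_{[1]}\subseteq\p$; but the hypothesis of that lemma is $\p\in\Ass^\infty(\mathcal{I})$, i.e. condition (a), which is exactly what you are trying to prove — and the circularity cannot be repaired, because without $I_{[1]}\subseteq\p$ the implication is false. Take $R=K[x_1,x_2]$, $I_{[k]}=(x_2)^k$, $\p=(x_1)$: since $x_2$ is invertible in $R_\p$ one gets $\mathcal{R}(\mathcal{I})_\p=R_\p[x_2t]=R_\p[t]$, so $H_0(x_1;\mathcal{R}(\mathcal{I})_\p)\cong\kappa(\p)[t]$ has Krull dimension $1$, yet $\Ass\big((x_2)^k\big)=\{(x_2)\}$ never contains $\p$. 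Second, your Krull-intersection patch for (b)$\Rightarrow$(c) is a non sequitur. (Note that $\depth R_\p=1$ is actually harmless there: then $\soc(R_\p)=H_\mu(\p R_\p;R_\p)=0$, so $\partial_k$ is injective and (b)$\Rightarrow$(c) follows; the only problem is depth $0$.) In depth $0$, the vanishing $\soc(A)\cap N_k=0$ makes $g_k\colon\soc(A)\to\soc(A/N_k)$ injective, and far from preventing the socle of $A/N_k$ from lying in the image of $g_k$, it guarantees nonzero socle classes of $A/N_k$ that are killed by $\partial_k$. Concretely, let $R=K[x]/(x^2)$, $I_{[k]}=(x)^k$ (so $I_{[k]}=0$ for $k\ge 2$), $\p=(x)$: condition (b) holds, since $\Ass(I_{[k]})=\Ass(R)=\{(x)\}$ for all $k\ge 2$, but $H_0\big(x;(I_{[k]})_\p\big)=I_{[k]}/xI_{[k]}=0$ for $k\ge 2$, so the Koszul module is concentrated in degrees $\le 1$, has finite length, and Krull dimension $0$: condition (c) fails. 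Consequently the ``degenerate situations'' you defer to a case-by-case inspection cannot be closed by any argument: the equivalence (b)$\Leftrightarrow$(c) is simply false there. The same two examples show that the unproved identification in the paper's own proof is a real gap as well, and that the theorem requires an additional hypothesis such as $\depth R_\p\ge 2$, or $I_{[1]}\subseteq\p$ together with $\depth R_\p\ge 1$ — conditions that do hold in the intended application to monomial primes treated via monomial localization as in \cite{BHR12}.
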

		\begin{proof}
			That (a) and (b) are equivalent is by definition of the set $\Ass^\infty(\mathcal{I})$.
			
			Before proving that (b) and (c) are equivalent, we notice the following fact. By localizing at $\p$, we see that $\p\in\Ass(I_{[k]})$ if and only if $\p R_\p\in\Ass((I_{[k]})_\p)$. Since $R_\p$ is again a Noetherian ring, then $\p R_\p\in\Ass((I_{[k]})_\p)$ if and only if there exists $f\in R_\p$ such that $((I_{[k]})_\p:f)=\p R_\p$. This implies that $((I_{[k]})_\p:\p R_\p)/(I_{[k]})_\p\ne0$. Conversely, if $((I_{[k]})_\p:\p R_\p)/(I_{[k]})_\p\ne0$ and $g+(I_{[k]})_\p$ is a non-zero element belonging to this module, then $((I_{[k]})_\p:g)$ is a proper ideal that contains $\p R_\p$. Since $\p R_\p$ is the unique maximal ideal of $R_\p$, we see that $((I_{[k]})_\p:g)=\p R_\p$ and so $\p R_\p\in\Ass((I_{[k]})_\p)$.
			
			Summarizing, $\p\in\Ass(I_{[k]})$ if and only if $((I_{[k]})_\p:\p R_\p)/(I_{[k]})_\p\ne0$.
			
			Now we prove that (b) and (c) are equivalent. By the previous fact, $\p\in\Ass(I_{[k]})$ for infinitely many $k$ if and only if $((I_{[k]})_\p:\p R_\p)/(I_{[k]})_\p\ne0$ for infinitely many $k$.
			
			Notice that $\mathcal{R(I)}_\p$ is a Noetherian graded ring. Indeed, localization preserves the Noetherian property. Whereas, $\mathcal{R(I)}_\p=\bigoplus_{k\ge0}(I_{[k]})_\p$ is graded, because applying localization to the property (iii) of $\mathcal{I}$ yields $(I_{[k]})_\p(I_{[\ell]})_\p\subseteq(I_{[k+\ell]})_\p$ for all $k,\ell\ge0$. Hence, the Koszul homology $H_{\mu(\p R_\p)-1}(\p R_\p;\mathcal{R}(\mathcal{I})_\p)$ is a graded $\mathcal{R}(\mathcal{I})_\p$-module whose $k$th graded component is $H_{\mu(\p R_\p)-1}(\p R_\p;(I_{[k]})_\p)=((I_{[k]})_\p:\p R_\p)/(I_{[k]})_\p$. Thus (b) holds if and only infinitely many graded components of $H_{\mu(\p R_\p)-1}(\p R_\p;\mathcal{R}(\mathcal{I})_\p)$ are non-zero. But this is clearly equivalent to condition (c).
		\end{proof}
		
		In the case of the ordinary powers of monomial ideals of a standard graded polynomial ring with coefficients over a field, this result was firstly noted by Bayati, Herzog and Rinaldo \cite{BHR12}. Furthermore, in such a situation one can replace ordinary localization with monomial localization. See \cite[Section 2]{BHR12} for the details.
		
		\section{Integer Programming and the $\v$-number}\label{Sec3-FS24}
		
		In this section, we show how the computation of the $\v$-function of certain graded filtrations of monomial ideals is related to integer programming problems.\smallskip
		
		Let $k$ be an integer and let ${\bf a}=(a_1,\dots,a_n),{\bf b}=(b_1,\dots,b_n)\in\ZZ^n$ be integral vectors. We set $k{\bf a}=(ka_1,\dots,ka_n)$ and ${\bf a+b}=(a_1+b_1,\dots,a_n+b_n)$. The modulus of ${\bf a}$ is defined as $|{\bf a}|=a_1+\dots+a_n$. We denote by ${\bf a}[i]=a_i$ the $i$th component of ${\bf a}$. Let ${\bf e}_1,\dots,{\bf e}_n$ be the canonical basis of $\ZZ^n$. That is, ${\bf e}_i[j]=1$ for $i=j$ and ${\bf e}_i[j]=0$ if $i\ne j$. We denote by ${\bf 0}$ the vector $(0,0,\dots,0)\in\ZZ^n$.\medskip
		
		Let ${\bf A}=\{{\bf a}_1,\dots,{\bf a}_m\}$ and ${\bf B}=\{{\bf b}_1,\dots,{\bf b}_\ell\}\subseteq\{{\bf e}_1,\dots,{\bf e}_n\}$ be two finite collections of integral vectors of $\ZZ^n_{\ge0}$. Let $k\ge1$. We define $\mathcal{P}_k({\bf A})$ to be the following set of lattice points of $\ZZ^n$:
		$$
		\mathcal{P}_k({\bf A})=\mathcal{P}_k({\bf a}_1,\dots,{\bf a}_m)=\big\{\!\sum_{i=1}^mk_i{\bf a}_i+\sum_{i=1}^nh_i{\bf e}_i\ :\ k_i,h_i\ge0,\ \sum_{i=1}^mk_i\ge k\big\}.
		$$
		
		For any $k\ge1$, we consider the following integer program:
		$$
		P_{{\bf A,B},k}\ :\ \ \begin{cases}
			\textup{minimize}&{\bf c}\in\ZZ^n,\\
			\textup{subject to}&{\bf c}+{\bf d}\in\mathcal{P}_k({\bf A})\ \textup{if and only if}\ {\bf d}\in\mathcal{P}_1({\bf B}).
		\end{cases}
		$$
		
		We say that a solution ${\bf c}$ of $P_{{\bf A,B},k}$, if it exists, is an \textit{optimal solution} if its module $|{\bf c}|$ is minimum among all solutions of the problem.\medskip
		
		Let $K$ be a field and $S=K[x_1,\dots,x_n]$. For a non-empty subset $A$ of $\ZZ_{\ge0}^n$, let $I({\bf A})\subset S$ be the monomial ideal generated by ${\bf x^a}=x_1^{a_1}\cdots x_n^{a_n}$, ${\bf a}=(a_1,\dots,a_n)\in{\bf A}$. Then $I({\bf A})^k=I(\mathcal{P}_k({\bf A}))$ for all $k\ge1$. Notice that $I({\bf A})\ne S$ if and only if ${\bf 0}\notin{\bf A}$.\medskip
		
		By Brodmann \cite{B79}, if $I\subset S$ is a homogeneous ideal, the sets $\Ass(I^k)$ stabilize, that is: $\Ass(I^k)=\Ass(I^{k+1})$ for all $k\gg0$. We denote by $\Ass^\infty(I)$ the common sets $\Ass(I^k)$ for $k\gg0$.
		
		\begin{Theorem}\label{Thm:LP-v-Number}
			Let ${\bf A},{\bf B}\subset\ZZ_{\ge0}^n$ be non-empty finite subsets such that ${\bf0}\notin{\bf A}$ and ${\bf B}\subseteq\{{\bf e}_1,\dots,{\bf e}_n\}$. Then, the following conditions are equivalent.
			\begin{enumerate}
				\item[\textup{(a)}] The integer program $P_{{\bf A,B},k}$ has a solution for all $k\gg0$.
				\item[\textup{(b)}] $I({\bf B})=(x_i:{\bf e}_i\in B)\in\Ass(I({\bf A})^k)$ for all $k\gg0$.
				\item[\textup{(c)}] The Krull dimension of the module $H_{\mu(\p R_\p)-1}(\p R_\p\,;\,\mathcal{R}(\mathcal{I})_\p)$ is positive, where $\mathcal{I}=\{I({\bf A})^k\}_{k\ge0}$ and $\p=I({\bf B})$.
			\end{enumerate}
			Furthermore, if any of the equivalent conditions hold, then for all $k\gg0$ the modulus of the optimal solution of $P_{{\bf A,B},k}$ is a linear function of the form $ak+b$, where $a,b\in\ZZ$ are suitable integers. In particular, $a\in\{|{\bf a}|\ :\ {\bf a}\in{\bf A}\}$.
		\end{Theorem}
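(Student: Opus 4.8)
The plan is to translate the integer program $P_{\mathbf{A},\mathbf{B},k}$ into a computation of the $\v$-number of the filtration of ordinary powers $\mathcal{I}=\{I(\mathbf{A})^k\}_{k\ge0}$ at the monomial prime $\p=I(\mathbf{B})$. First I would record the two combinatorial facts underlying the translation: a monomial $\mathbf{x}^{\mathbf{d}}$ lies in $I(\mathbf{A})^k=I(\mathcal{P}_k(\mathbf{A}))$ precisely when $\mathbf{d}\in\mathcal{P}_k(\mathbf{A})$, and it lies in $\p=(x_i:\mathbf{e}_i\in\mathbf{B})$ precisely when $\mathbf{d}\in\mathcal{P}_1(\mathbf{B})$; moreover both $\mathcal{P}_k(\mathbf{A})$ and $\mathcal{P}_1(\mathbf{B})$ are closed under adding any $\mathbf{e}_i$. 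With these, the constraint of $P_{\mathbf{A},\mathbf{B},k}$, namely that $\mathbf{c}+\mathbf{d}\in\mathcal{P}_k(\mathbf{A})$ if and only if $\mathbf{d}\in\mathcal{P}_1(\mathbf{B})$, is exactly the assertion that the monomial $\mathbf{x}^{\mathbf{c}}$ satisfies $(I(\mathbf{A})^k:\mathbf{x}^{\mathbf{c}})=\p$. Since the modulus $|\mathbf{c}|$ is the degree of $\mathbf{x}^{\mathbf{c}}$, an optimal solution is a monomial of least degree with this colon property, so that its modulus equals $\v_\p(I(\mathbf{A})^k)$.

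For the equivalence of (a) and (b) I would use that $I(\mathbf{A})$ is a monomial ideal, so every associated prime of $I(\mathbf{A})^k$ is a monomial prime and is witnessed by a monomial colon; hence $\p=I(\mathbf{B})\in\Ass(I(\mathbf{A})^k)$ if and only if there is a monomial $\mathbf{x}^{\mathbf{c}}$ with $(I(\mathbf{A})^k:\mathbf{x}^{\mathbf{c}})=\p$, which by the dictionary is exactly a feasible point of $P_{\mathbf{A},\mathbf{B},k}$. By Brodmann's theorem the sets $\Ass(I(\mathbf{A})^k)$ stabilise, so ``for infinitely many $k$'' and ``for all $k\gg0$'' coincide, matching the quantifiers in (a) and (b).

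For the equivalence of (b) and (c) I would invoke the socle module $\Soc^*_\p(\mathcal{I})$ introduced in the proof of Theorem \ref{Thm:FS24}. Since the Rees algebra of ordinary powers is standard graded, the period in Lemma \ref{Lem:AssInftyFiltr} is $c=1$, so $\Soc^*_\p(\mathcal{I})$ is a finitely generated graded $\mathcal{R}(\mathcal{I})$-module whose degree-$k$ component, by equation \eqref{eq:k+1Comp} and Conca's Lemma \ref{Lem:Conca}, is nonzero exactly when $\p\in\Ass(I(\mathbf{A})^{k+1})$ (for $k\gg0$). A finitely generated graded module over a standard graded algebra has positive Krull dimension if and only if it is nonzero in infinitely many degrees; combined with Brodmann stabilisation this yields (b) $\iff$ (c). This is also the monomial instance of Theorem \ref{Thm:pinAss8(I)}.

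Finally, for the last assertion: once the equivalent conditions hold, $\p=I(\mathbf{B})\in\Ass^\infty(\mathcal{I})$, and because $\mathcal{I}$ is Noetherian with standard graded Rees algebra, Theorem \ref{Thm:FS24} (with $c=1$) shows that $\v_\p(I(\mathbf{A})^k)$ is a genuine linear function $ak+b$ for $k\gg0$, which by the dictionary is the modulus of an optimal solution of $P_{\mathbf{A},\mathbf{B},k}$. The Corollary to Theorem \ref{Thm:FS24} identifies the slope as $a=\lim_{k\to\infty}\v_\p(I(\mathbf{A})^k)/k\in\{d:(I(\mathbf{A})/\mm I(\mathbf{A}))_d\ne0\}$, that is, a degree in which $I(\mathbf{A})$ has a minimal generator; as every such generator is some $\mathbf{x}^{\mathbf{a}}$ with $\mathbf{a}\in\mathbf{A}$, we conclude $a\in\{|\mathbf{a}|:\mathbf{a}\in\mathbf{A}\}$. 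The main obstacle I anticipate lies entirely in securing the dictionary: one must verify that a feasible $\mathbf{c}$ may be chosen in $\ZZ_{\ge0}^n$, so that $\mathbf{x}^{\mathbf{c}}$ is a genuine monomial of $S$ and the constraint literally computes $(I(\mathbf{A})^k:\mathbf{x}^{\mathbf{c}})$ rather than being met spuriously by lattice points with negative coordinates, and that the associated prime $\p$ is detected by such monomial colons. Once this is in place, quasi-linearity and the value of the leading coefficient follow immediately from Section \ref{Sec1-FS24}.
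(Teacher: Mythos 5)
Your proposal is correct and follows essentially the same route as the paper's own proof: the same dictionary identifying feasible points of $P_{{\bf A,B},k}$ with monomials ${\bf x^c}$ satisfying $(I({\bf A})^k:{\bf x^c})=I({\bf B})$ (so that the optimal modulus is $\v_{I({\bf B})}(I({\bf A})^k)$), the same use of Lemma~\ref{Lem:Conca} and the module $\Soc^*_{I({\bf B})}(\mathcal{I})$ for the equivalence of (b) and (c), and the same appeal to Theorem~\ref{Thm:FS24} together with the identification of the slope as a generator degree of $I({\bf A})$. The only differences are cosmetic: you spell out facts the paper leaves implicit (associated primes of monomial ideals are witnessed by monomial colons, Brodmann stabilization, $c=1$ since the Rees algebra of ordinary powers is standard graded), and you invoke the paper's Corollary to Theorem~\ref{Thm:FS24} for the slope where the paper cites Conca's Remark~1.3.
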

		\begin{proof}
			Fix a field $K$ and an integer $k\ge1$. Then, there exists a solution ${\bf c}\in\ZZ_{\ge0}^n$ of the integer program $P_{{\bf A,B},k}$ if and only if we have ${\bf x^c}{\bf x^d}\in I(\mathcal{P}_k({\bf A}))=I({\bf A})^k$ only when ${\bf x^d}\in I({\bf B})$. That is, there exists a solution ${\bf c}\in\ZZ_{\ge0}^n$ of the integer program $P_{{\bf A,B},k}$ if and only if $(I({\bf A})^k:{\bf x^c})=I({\bf B})$. Thus, condition (a) is equivalent to condition (b). Let $\mathcal{I}=\{I({\bf A})^k\}_{k\ge0}$. By Theorem \ref{Thm:pinAss8(I)}, (b) is equivalent to the fact that $H_{\mu(\p R_\p)-1}(\p R_\p\,;\,\mathcal{R}(\mathcal{I})_\p)$ is non-zero for infinitely many $k$, where $\p=I({\bf B})$. This is equivalent to condition (c).
			
			Finally, suppose that (a) or (b) or (c) holds. For $k\gg0$ the modulus $|{\bf c}|$ of the optimal solution ${\bf c}$ of $P_{{\bf A,B},k}$ is the $\v_{I({\bf B})}$-number $\v_{I({\bf B})}(I({\bf A})^k)$. By Theorem \ref{Thm:FS24}, (see also \cite[Theorem 3.1]{FS2} or \cite[Theorem 1.1]{Conca23}) $\v_{I({\bf B}))}(I({\bf A})^k)=ak+b$ for all $k\gg0$. By \cite[Remark 1.3]{Conca23}, (see also \cite[Proposition 3.2]{FS2}) we have $a\in\{\deg({\bf x^a}):{\bf x^a}\in\mathcal{G}(I({\bf A}))\}\subseteq\{|{\bf a}|\ :\ {\bf a}\in{\bf A}\}$, as desired.
		\end{proof}
		
		As before, let ${\bf A}=\{{\bf a}_1,\dots,{\bf a}_m\}$ and ${\bf B}=\{{\bf b}_1,\dots,{\bf b}_\ell\}\subseteq\{{\bf e}_1,\dots,{\bf e}_n\}$ be two finite collections of integral vectors of $\ZZ^n_{\ge0}$. Let $k\ge1$. We define $\overline{\mathcal{P}}_k({\bf A})$ to be the set of lattice points of the convex hull in $\RR^n$ of the set $\mathcal{P}({\bf A})$. That is, we set $\overline{\mathcal{P}}_k({\bf A})\ =\ \textup{conv}(\mathcal{P}_k({\bf A}))\cap\ZZ_{\ge0}^n$.
		By \cite[Corollary 1.4.3]{HH2011}, $I(\overline{\mathcal{P}}_k({\bf A}))$ is equal to the integral closure of $\overline{I(\mathcal{P}_k({\bf A}))}$. Hence $I(\overline{\mathcal{P}}_k({\bf A}))=\overline{I({\bf A})^k}$ for all $k\ge1$.
		
		For any ${\bf A}=\{{\bf a}_1,\dots,{\bf a}_m\}$, ${\bf B}=\{{\bf b}_1,\dots,{\bf b}_\ell\}\subseteq\{{\bf e}_1,\dots,{\bf e}_n\}$ and $k\ge1$, we consider the following integer program:
		$$
		\overline{P}_{{\bf A,B},k}\ :\ \ \begin{cases}
			\textup{minimize}&{\bf c}\in\ZZ^n,\\
			\textup{subject to}&{\bf c}+{\bf d}\in\overline{\mathcal{P}}_k({\bf A})\ \textup{if and only if}\ {\bf d}\in\mathcal{P}_1({\bf B}).
		\end{cases}
		$$
		
		It is well-known that $\mathcal{I}=\{\overline{I^k}\}_{k\ge0}$ is a Noetherian graded filtration, if $I\subset S$ is a homogeneous ideal. By Brodmann \cite{B79}, or \cite{MAE79}, the sets $\Ass(\overline{I^k})$ stabilize: that is $$\Ass(\overline{I^k})=\Ass(\overline{I^{k+1}})$$ for all $k\gg0$. We set $$\Ass^\infty(\{\overline{I^k}\}_{k\ge0})=\overline{\textup{Ass}}^\infty(I).$$ As in Theorem \ref{Thm:LP-v-Number} one can show that
		\begin{Theorem}\label{Thm:LP-v-Number1}
			Let ${\bf A},{\bf B}\subset\ZZ_{\ge0}^n$ be non-empty finite subsets such that ${\bf0}\notin{\bf A}$ and ${\bf B}\subseteq\{{\bf e}_1,\dots,{\bf e}_n\}$. Then, the following conditions are equivalent.
			\begin{enumerate}
				\item[\textup{(a)}] The integer program $\overline{P}_{{\bf A,B},k}$ has a solution for all $k\gg0$.
				\item[\textup{(b)}] $I({\bf B})=(x_i:{\bf e}_i\in B)\in\overline{\Ass}^\infty(I({\bf A}))$.
				\item[\textup{(c)}] The Krull dimension of the module $H_{\mu(\p R_\p)-1}(\p R_\p\,;\,\mathcal{R}(\mathcal{I})_\p)$ is positive, where $\mathcal{I}=\{\overline{I({\bf A})^k}\}_{k\ge0}$ and $\p=I({\bf B})$.
			\end{enumerate}
			Furthermore, if any of the equivalent conditions hold, then for all $k\gg0$ the modulus of the optimal solution of $\overline{P}_{{\bf A,B},k}$ is a quasi-linear function.
		\end{Theorem}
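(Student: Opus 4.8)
The plan is to run the proof of Theorem \ref{Thm:LP-v-Number} essentially verbatim, substituting the integral closure everywhere and tracking the one structural change that downgrades linearity to quasi-linearity. First I would set up the dictionary between $\overline{P}_{{\bf A,B},k}$ and colon ideals. Fix a field $K$ and an integer $k\ge1$, and use the identity $I(\overline{\mathcal{P}}_k({\bf A}))=\overline{I({\bf A})^k}$ recorded above, which follows from \cite[Corollary 1.4.3]{HH2011}. Since ${\bf x^d}\in I({\bf B})$ exactly when ${\bf d}\in\mathcal{P}_1({\bf B})$, the constraint of $\overline{P}_{{\bf A,B},k}$ reads ${\bf x^c}{\bf x^d}\in\overline{I({\bf A})^k}$ if and only if ${\bf x^d}\in I({\bf B})$. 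Reading the forward implication as $I({\bf B})\subseteq(\overline{I({\bf A})^k}:{\bf x^c})$ and the reverse as $(\overline{I({\bf A})^k}:{\bf x^c})\subseteq I({\bf B})$, one sees that ${\bf c}\in\ZZ_{\ge0}^n$ is a solution of $\overline{P}_{{\bf A,B},k}$ precisely when $(\overline{I({\bf A})^k}:{\bf x^c})=I({\bf B})$. Because $I({\bf B})$ is a monomial prime, such a ${\bf c}$ exists if and only if $I({\bf B})\in\Ass(\overline{I({\bf A})^k})$, and invoking the stabilization $\Ass(\overline{I({\bf A})^k})=\Ass(\overline{I({\bf A})^{k+1}})$ for $k\gg0$ gives the equivalence (a) $\iff$ (b).

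For (b) $\iff$ (c) I would reuse the $\Soc^*$ machinery. The crucial input is that $\mathcal{I}=\{\overline{I({\bf A})^k}\}_{k\ge0}$ is a Noetherian graded filtration, equivalently that the normalized Rees algebra $\bigoplus_{k\ge0}\overline{I({\bf A})^k}$ is Noetherian; this holds because over $S=K[x_1,\dots,x_n]$ it is module finite over the ordinary Rees algebra. Granting this, Lemma \ref{Lem:Conca} yields $\v_{I({\bf B})}(\overline{I({\bf A})^k})=\alpha\big(\Soc^*_{I({\bf B})}(\mathcal{I})_{(*,k-1)}\big)$ for $k\gg0$, so that $I({\bf B})\in\Ass(\overline{I({\bf A})^k})$ exactly when the graded piece $\Soc^*_{I({\bf B})}(\mathcal{I})_{(*,k-1)}$ is nonzero. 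Hence (b) amounts to this piece being nonzero for infinitely many $k$; since $\Soc^*_{I({\bf B})}(\mathcal{I})$ is a finitely generated graded module over the standard graded Veronese $\mathcal{R}(\mathcal{I})^{(c)}$, having infinitely many nonzero components is equivalent to the positivity of its Krull dimension, which is condition (c).

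For the furthermore part, whenever one of (a)--(c) holds, the modulus $|{\bf c}|$ of an optimal solution of $\overline{P}_{{\bf A,B},k}$ is by definition the least degree of a monomial ${\bf x^c}$ with $(\overline{I({\bf A})^k}:{\bf x^c})=I({\bf B})$, that is, exactly $\v_{I({\bf B})}(\overline{I({\bf A})^k})$. Applying Theorem \ref{Thm:FS24} to the Noetherian filtration $\{\overline{I({\bf A})^k}\}_{k\ge0}$ then shows this quantity is a quasi-linear function of $k$ for $k\gg0$, completing the proof.

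The single genuine difference from Theorem \ref{Thm:LP-v-Number}, and the point I expect to require the most care, is the gradedness of the underlying Rees algebra. In the ordinary-powers case $\bigoplus_k I({\bf A})^k$ is standard graded, so the variation of \cite[Theorem 8.3.4]{BCRV22} forces a genuinely linear $\v$-function and lets one pin down the leading coefficient as $a\in\{|{\bf a}|:{\bf a}\in{\bf A}\}$. For integral closures the normalized Rees algebra is Noetherian but in general fails to be generated in degree one, so only a Veronese $\mathcal{R}(\mathcal{I})^{(c)}$ with $c>1$ is standard graded; this is precisely the obstruction that lowers the conclusion from linear to quasi-linear, and it is why the ``in particular $a\in\{|{\bf a}|:{\bf a}\in{\bf A}\}$'' clause of Theorem \ref{Thm:LP-v-Number} has no analogue here.
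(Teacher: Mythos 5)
Your proposal is correct and is essentially the paper's intended argument: the paper establishes Theorem \ref{Thm:LP-v-Number1} only by the remark ``As in Theorem \ref{Thm:LP-v-Number} one can show that,'' i.e., precisely the adaptation you carry out (the colon-ideal dictionary via $I(\overline{\mathcal{P}}_k({\bf A}))=\overline{I({\bf A})^k}$, stabilization of $\Ass(\overline{I({\bf A})^k})$ for the equivalence of (a) and (b), the $\Soc^*$ module and Lemma \ref{Lem:Conca} for (b) and (c), and Theorem \ref{Thm:FS24} applied to the Noetherian filtration $\{\overline{I({\bf A})^k}\}_{k\ge0}$ for the quasi-linearity). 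Your closing explanation of why only a Veronese of the normalized Rees algebra is standard graded, which is what downgrades the conclusion from linear to quasi-linear and removes the leading-coefficient clause, is exactly the right account of the difference from Theorem \ref{Thm:LP-v-Number}.
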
\medskip
		
		\section{The package \texttt{VNumber}}\label{Sec4-FS24}
		
		In this section we give a self contained introduction to the \textit{Macaulay2} \cite{GDS} package \texttt{VNumber} \cite{FSPack} and demonstrate its main features.\medskip
		
		Let $I\subset S=K[x_1,\dots,x_n]$ be a homogeneous ideal, and let $\p\subset S$ be a prime ideal. We set $\Ass^\infty(I)=\Ass^\infty(\{I^k\}_{k\ge0})$ and by $\Max^\infty(I)$ we denote the set of primes of $\Ass^\infty(I)$ which are maximal with respect to the inclusion.
		
		In the next table we collect the functions available in the package. Except for the functions \texttt{reesMap}$(I)$, \texttt{vNumberP}$(I,\p)$ and \texttt{vNumber}$(I)$, all other functions require that $I$ is a monomial ideal.
		\small\begin{table}[H]
			\centering
			\begin{tabular}{ll}
				\rowcolor{black!20}\bottomrule[1.05pt]
				Functions&Description\\
				\toprule[1.05pt]
				\texttt{reesMap}$(I)$&Computes the Rees map of $I$\\
				\texttt{stablePrimes}$(I)$&Computes $\Ass^\infty(I)$\\
				\texttt{stableMax}$(I)$&Computes $\Max^\infty(I)$\\
				\texttt{isStablePrime}$(I,\p)$&Checks if $\p\in\Ass^\infty(I)$\\
				\texttt{vNumberP}$(I,\p)$&Computes $\v_\p(I)$\\
				\texttt{vNumber}$(I)$&Computes $\v(I)$\\
				\texttt{soc}$(I,\p)$&Computes $H_{\mu(\p R_\p)-1}(\p R_\p\,;\,\mathcal{R}(\mathcal{I})_\p)$\\
				\texttt{vFunctionP}$(I,\p)$&Computes $\v_\p(I^k)=a_\p k+b_\p$ for $k\gg0$\\
				\texttt{vFunction}$(I)$&Computes $\v(I^k)=ak+b$ for $k\gg0$\\
				\bottomrule[1.05pt]
			\end{tabular}\medskip
			\caption{List of the functions of \texttt{VNumber}.}
		\end{table}
		\normalsize
		
		Let $I\subset S$ be a homogeneous ideal and let $f_1,\dots,f_m$ be any minimal homogeneous generating set of $I$. Then $\mathcal{R}(I)=\bigoplus_{k\ge0}I^k$ may be computed as follows. First, to keep track of the natural bidegree of $\mathcal{R}(I)$ we introduce the variable $t$ and we replace $\mathcal{R}(I)$ by $\bigoplus_{k\ge0}I^kt^k$. Then, we can consider the bigraded map
		$$
		\varphi_I\ :\ T=K[x_1,\dots,x_n,y_1,\dots,y_m]\rightarrow S[t]=K[x_1,\dots,x_n,t]
		$$
		defined by setting $\varphi(x_i)=x_i$ for $1\le i\le n$ and $\varphi(y_j)=f_jt$ for $1\le j\le m$, and with $\bideg(x_i)=(1,0)$ and $\bideg(y_j)=(\deg f_j,1)$. We call $\varphi_I$ the \textit{Rees map} of $I$ (induced by $f_1,\dots,f_m$). Notice that $\varphi_I$ depends on the choice of a minimal homogeneous generating set of $I$. However, no matter which minimal system of homogeneous generators of $I$ we choose, we always have
		$$
		\mathcal{R}(I)\ \cong\ T/\ker\varphi_I.
		$$
		
		The function \texttt{reesMap}$(I)$ allows to compute the map $\varphi_I$, for some choice of the minimal homogeneous generating set of $I$.
		
		The functions \texttt{stablePrimes}$(I)$, \texttt{stableMax}$(I)$, \texttt{isStablePrime}$(I,\p)$ are based on Theorem \ref{Thm:pinAss8(I)} which says that $\p\in\Ass^\infty(I)$ if and only if the Krull dimension of the module $H_{\mu(\p R_\p)-1}(\p R_\p\,;\,\mathcal{R}(I)_\p)$ is positive. At the moment, for the function \texttt{isStablePrime}$(I,\p)$ it is required that $I$ is a monomial ideal, so that one can replace ordinary localization by monomial localization as shown in \cite[Section 2]{BHR12}. Indeed our code for this function is just the one given in \cite{BHR11}. On the other hand, if $I$ is a homogeneous ideal (not necessarily monomial) while \textit{Macaulay2} \cite{GDS} can compute the module $H_{\mu(\p R_\p)-1}(\p R_\p\,;\,\mathcal{R}(I)_\p)$, in most cases it is not able to compute its Krull dimension.
		
		To compute \texttt{vNumberP}$(I,\p)$ and \texttt{vNumber}$(I)$ the Lemma \ref{Lem:Conca} of Conca is used. In this case $I$ can be any homogeneous ideal of $S$. When $I$ is a monomial ideal, the function \texttt{soc}$(I,\p)$ computes the module $H_{\mu(\p R_\p)-1}(\p R_\p\,;\,\mathcal{R}(\mathcal{I})_\p)$ and the function \texttt{reesMap}$(I)$. The computation of the $\v$-function \texttt{vFunction}$(I)$ and of the $\v_\p$-function \texttt{vFunctionP}$(I,\p)$ employs the proof of \cite[Theorem 1.2]{Conca23} or also Theorem \ref{Thm:FS24}.\bigskip
		
		The next example illustrates how to use the package.\bigskip
		
		\small
		\verb|i1: loadPackage "VNumber"|\smallskip
		
		\verb|i2: S = QQ[x_1..x_6];|\smallskip
		
		\verb|i3: I = ideal(x_1*x_2,x_1*x_3,x_2*x_3,x_2*x_4,x_3*x_4,x_4*x_5,x_5*x_6);|\smallskip
		
		\verb|i4: stablePrimes I|\smallskip
		
		\verb|o4:| $\{\texttt{ideal}{}\left(x_{5},\,x_{3},\,x_{2}\right),\texttt{ideal}{}\left(x_{5},\,x_{4},\,x_{2},\,x_{1}\right),\texttt{ideal}{}\left(x_{5},\,x_{4},\,x_{3},\,x_{1}\right),$
		
		\verb|   | $\texttt{ideal}{}\left(x_{6},\,x_{4},\,x_{2},\,x_{1}\right),\texttt{ideal}{}\left(x_{6},\,x_{4},\,x_{3},\,x_{1}\right),\texttt{ideal}{}\left(x_{6},\,x_{4},\,x_{3},\,x_{2}\right),$
		
		\verb|   | $\texttt{ideal}{}\left(x_{1},\,x_{2},\,x_{3},\,x_{4},\,x_{6}\right),\texttt{ideal}{}\left(x_{1},\,x_{2},\,x_{3},\,x_{4},\,x_{5}\right),\texttt{ideal}{}\left(x_{1},\,x_{2},\,x_{3},\,x_{4},\,x_{5},\,x_{6}\right)\}$\smallskip
		
		\verb|i5: P = (stablePrimes I)#1;|\smallskip
		
		\verb|i6: vFunctionP(I,P)|\smallskip
		
		\verb|o6: {2,0}|\smallskip
		
		\verb|i7: vFunction I|\smallskip
		
		\verb|o7: {2,-1}|\bigskip
		
		\normalsize
		
		\section{Open questions}\label{Sec5-FS24}
		Let $\mathcal{I}=\{I_{[k]}\}_{k\ge0}$ be a Noetherian graded filtration of $R$.
		\begin{Question}\label{FSQuest1}
			Is it true that $\v(I_{[k]})<\reg(I_{[k]})$ for all $k\gg0$?
		\end{Question}
		\begin{Problem}
			Find an example of non Noetherian graded filtration $\mathcal{I}=\{I_{[k]}\}_{k\ge0}$ of $R$ whose $\v$-function $\v(I_{[k]})$ is not an eventually quasi-linear function in $k$. Does the limit $\lim_{k\rightarrow\infty}\v(I_{[k]})/k$ exists? If so, what it is equal to?
		\end{Problem}
		
		This problem was recently addressed in \cite{VS24} where an example of a non Noetherian graded filtration $\mathcal{I}$ whose $\v$-function $\v(I_{[k]})$ is not an eventually quasi-linear function is provided.
		
		Suppose now that $R$ is the standard graded polynomial ring $S=K[x_1,\dots,x_n]$ over an infinite field $K$. Let $I\subset S$ be a homogeneous ideal, and let $\mathcal{I}=\{I^k\}_{k\ge0}$ be the $I$-adic filtration. Let $\p\in\Ass^\infty(I)$. While we know that $\v_\p(I^k)$ and $\v(I^k)$ are eventually linear functions \cite[Theorem 3.1]{FS2}, their initial behaviour is quite mysterious. In particular, we do not know if they can have any given number of strict local maxima. Due to experimental evidence, we ask the following
		\begin{Question}\phantom{a}
			\begin{enumerate}
				\item[\textup{(a)}] Is it true that $\v_\p(I^k)<\v_\p(I^{k+1})$ for all $\p\in\Ass^\infty(I)$ and all $k\ge1$?
				\item[\textup{(b)}] Is it true that $\v(I^k)<\v(I^{k+1})$ for all $k\ge1$?
			\end{enumerate}
		\end{Question}
		
		In \cite[Theorem 4.1]{FS2} we proved that $\lim_{k\rightarrow\infty}\v(I^k)/k=\alpha(I)$. Thus $\v(I^k)=\alpha(I)k+b$ for all $k\gg0$ and a certain integer $b\in\ZZ$. Since $\v(I^k)=\max_{\p\in\Ass^\infty(I)}\v_\p(I^k)$ for all $k\gg0$, and each $\v_\p(I^k)$ is an eventually linear function in $k$, there exists at least one prime $\p\in\Ass^\infty(I)$ such that $\lim_{k\rightarrow\infty}\v_\p(I^k)/k=\alpha(I)$. In view of this observation and some experimental evidence, we expect that
		\begin{Conjecture}
			Let $I\subset S$ be a homogeneous ideal. For all $\p\in\Max^\infty(I)$, we have
			$$
			\lim_{k\rightarrow\infty}\frac{\v_\p(I^k)}{k}\ =\ \alpha(I).
			$$
		\end{Conjecture}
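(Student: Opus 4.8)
The plan is to prove the two inequalities $\lim_k\v_\p(I^k)/k\ge\alpha(I)$ and $\le\alpha(I)$ separately, the first being routine and the second carrying all the difficulty. Fix $\p\in\Max^\infty(I)$. Since $\Ass(I^k)=\Ass^\infty(I)$ for $k\gg0$ \cite{B79} and $\p$ is maximal in $\Ass^\infty(I)$, no associated prime of $I^k$ strictly contains $\p$, so in Conca's Lemma~\ref{Lem:Conca} the ideal $\q$ equals $S$ and $\p+\q^\infty=S$, whence the formula collapses to
$$
\v_\p(I^k)\ =\ \alpha(M_k),\qquad M_k\ :=\ (I^k:\p)/I^k\ =\ (0:_{S/I^k}\p),
$$
for all $k\gg0$. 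By the Corollary to Theorem~\ref{Thm:FS24} (applied with $c=1$, as $\mathcal R(I)=\bigoplus_k I^k$ is standard graded) we have $(I^{k+1}:\p)=I\,(I^k:\p)$ for $k\gg0$; tensoring $0\to I^k\to(I^k:\p)\to M_k\to0$ with $I$ yields a graded surjection $I\otimes_S M_k\twoheadrightarrow M_{k+1}$. Since $\alpha(I\otimes_S M_k)=\alpha(I)+\alpha(M_k)$ and a quotient cannot lower the initial degree, this already gives $\v_\p(I^{k+1})\ge\v_\p(I^k)+\alpha(I)$, hence $\lim_k\v_\p(I^k)/k\ge\alpha(I)$.

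It remains to prove $\v_\p(I^{k+1})\le\v_\p(I^k)+\alpha(I)$ for $k\gg0$, which by the surjection above is equivalent to saying that the bottom piece of $I\otimes_S M_k$, in degree $\alpha(I)+\v_\p(I^k)$, is not entirely annihilated. Writing $\alpha=\alpha(I)$, I would choose $f\in(I^k:\p)\setminus I^k$ with $\deg f=\v_\p(I^k)=\alpha(M_k)$ and seek $h\in I_\alpha$ with $hf\notin I^{k+1}$; then $\p(hf)=h(\p f)\subseteq hI^k\subseteq I^{k+1}$, so $hf\in(I^{k+1}:\p)$ and $\v_\p(I^{k+1})\le\deg(hf)=\v_\p(I^k)+\alpha$. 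Observe that $h\in I\subseteq\p$ already forces $hf\in I^k$ (because $\p f\subseteq I^k$), so the only issue is to keep $hf$ out of $I^{k+1}$.

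The mechanism I propose is to pass to the associated graded ring $\gr_I(S)=\bigoplus_k I^k/I^{k+1}$ and track leading forms: for nonzero $g$ let $g^*\in\gr_I(S)$ be its leading form and $\ord_I(g)$ its $I$-adic order. Any nonzero $h\in I_\alpha$ has $\ord_I(h)=1$ (as $(I^2)_\alpha=0$), so $h^*\in\gr_I(S)_1$ is nonzero; and $f\notin I^k$ gives $\ord_I(f)=p_0\le k-1$ with $f^*\neq0$. The point is that $hf\notin I^{k+1}$ is equivalent to $h^*f^*\neq0$ in $\gr_I(S)$, in which case $\ord_I(hf)=p_0+1\le k<k+1$. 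Better still, fixing one general $h\in I_\alpha$ (possible as $K$ is infinite) and iterating, I would set $f_k=h^{\,k-k_0}f$ for a fixed witness $f=f_{k_0}$; each $f_k$ lies in $(I^k:\p)$, and $\v_\p(I^k)\le\deg f+(k-k_0)\alpha$ would follow for all $k$ provided
$$
(h^*)^{\,m}\,f^*\ \neq\ 0\ \ \text{in }\gr_I(S)\quad\text{for all }m\ge0,
$$
i.e.\ provided $h^*\notin\sqrt{\Ann_{\gr_I(S)}(f^*)}$. This would give $\lim_k\v_\p(I^k)/k\le\alpha$ and finish the proof.

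The hard part is precisely this non-vanishing of leading forms. For general $h\in I_\alpha$ the class $h^*$ is a general element of the degree-one part of the fiber, avoiding every prime of $\gr_I(S)$ that does not contain the ideal generated by $\gr_I(S)_1$; the obstacle is to show that no associated prime of $\gr_I(S)$ in the support of the socle class $f^*$ contains all degree-$\alpha$ initial forms, equivalently that a general degree-$\alpha$ form is a nonzerodivisor on the relevant socle module. This is exactly where the maximality of $\p$ must enter: one expects the primes of $\gr_I(S)$ attached to a maximal $\p\in\Ass^\infty(I)$ to be minimal enough over the fiber ideal that such an $h$ exists, whereas for a non-maximal $\p$ the correction term $\q^\infty$ in Lemma~\ref{Lem:Conca} can force a strictly larger slope. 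When $\gr_I(S)$ is a domain (e.g.\ $I$ of linear type, or normal with reduced fiber) the displayed non-vanishing is immediate, since then $h^*$ is a nonzerodivisor and $f^*\neq0$; this special case both supports the conjecture and isolates the general obstruction as the control of the zerodivisors of $\gr_I(S)$ along the component corresponding to $\p$, which is presumably why the statement remains conjectural.
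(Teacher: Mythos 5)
First, the ground truth: the paper does not prove this statement at all --- it is posed as a \emph{Conjecture}, supported only by experimental evidence and by the observation that at least one $\p\in\Ass^\infty(I)$ must attain slope $\alpha(I)$. So there is no proof in the paper to compare yours against, and the question is whether your attempt closes the problem. It does not, and you concede this yourself in the final sentence. The preparatory steps are sound: for $\p\in\Max^\infty(I)$ and $k\gg0$ the set $X_\p$ in Lemma~\ref{Lem:Conca} is indeed empty (using Brodmann's stabilization \cite{B79}), so $\v_\p(I^k)=\alpha\bigl((I^k:\p)/I^k\bigr)$; the Corollary to Theorem~\ref{Thm:FS24} applies with $c=1$ because $\mathcal{R}(I)$ is standard graded, giving $(I^{k+1}:\p)=I(I^k:\p)$ for $k\gg0$ and hence your surjection $I\otimes_S M_k\twoheadrightarrow M_{k+1}$; and the resulting inequality $\v_\p(I^{k+1})\ge\v_\p(I^k)+\alpha(I)$ is correct --- though this lower bound is already known without any work, since by \cite[Remark 1.3]{Conca23} (or \cite[Proposition 3.2]{FS2}) the slope of the eventually linear function $\v_\p(I^k)$ is the degree of a minimal generator of $I$, hence $\ge\alpha(I)$. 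The only content of the conjecture is the reverse inequality.

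And at exactly that point your argument stops being a proof. The existence, for each $k\gg0$, of $h\in I_\alpha$ with $h^*f^*\ne0$ in $\gr_I(S)$ --- let alone the uniform version $h^*\notin\sqrt{\Ann_{\gr_I(S)}(f^*)}$ --- is asserted as what ``one expects'' but never argued. Your reduction is a faithful restatement of the problem rather than leverage: the statement that a general degree-$\alpha$ initial form does not kill the socle class $f^*$ is, after the degree bookkeeping you set up, equivalent to $\v_\p(I^{k+1})\le\v_\p(I^k)+\alpha(I)$ for $k\gg0$, which is (up to bounded error) the conjecture itself. Crucially, the maximality of $\p$ enters your mechanism only through the collapse of Conca's formula ($\q=S$); you offer no argument that maximality constrains the associated primes of $\gr_I(S)$ meeting the support of $f^*$, and nothing you write rules out the scenario that every element of the relevant graded piece of $\gr_I(S)$ is a zerodivisor on $f^*$, or that $f^*$ is annihilated by a power of the whole fiber --- ruling that out is precisely the missing idea. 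The special case where $\gr_I(S)$ is a domain is correct but far too restrictive to bear on the general statement. In short: an honest and potentially useful reduction of the conjecture to a nonzerodivisor problem in the associated graded ring, together with a correct (and already known) lower bound, but not a proof; the statement remains open after your attempt, just as it is open in the paper.
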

		
		Assume that $I\subset S$ is a proper monomial ideal. By \cite[Proposition 2.2]{F2023} for all $k\gg0$, $\v(I^k)=ak+b$ where $a=\alpha(I)\ge1$ and $b\ge-1$. By \cite[Theorem 5.5]{DMNB23}, $\reg(I^k)=ck+d$ where $c\ge1$ and $d\ge0$. In view of Question \ref{FSQuest1} we ask the following
		\begin{Question}
			Let $a,b,c,d$ integers such that $a\ge1$, $b\ge-1$, $c\ge1$, $d\ge0$ and $ak+b\le ck+d$ for all $k\gg0$. Can we find a monomial ideal $I$ in some polynomial ring $S$ such that $\v(I^k)=ak+d$ and $\reg(I^k)=ck+d$ for all $k\gg0$.
		\end{Question}
		
		In \cite{BMS24}, the authors introduced the \textit{$\v$-stability index} of a graded ideal $I\subset S$ as
		$$
		\textup{vstab}(I)\ =\ \min\{t\ :\ \v(I^k)=\alpha(I)k+b,\ \textit{for all}\ k\ge t,\ \textit{and some}\ b\in\ZZ\}.
		$$
		Let $\p\in\Ass^\infty(I)$. We define the \textit{$\v_\p$-stability index} of $I\subset S$ as the integer
		$$
		\v_\p\!\textup{-stab}(I)\ =\ \min\{t\ :\ \v_\p(I^k)=ak+b,\ \textit{for all}\ k\ge t,\ \textit{and some}\ a,b\in\ZZ\}.
		$$
		
		Recall that by $\textup{astab}(I)$ one denotes the integer
		$$
		\textup{astab}(I)\ =\ \min\{t\ :\ \Ass(I^k)=\Ass^\infty(I),\ \textit{for all}\ k\ge t\}.
		$$
		Whereas, for $\p\in\Ass^\infty(I)$ we define the integer
		$$
		\textup{astab}_\p(I)\ =\ \min\{t\ :\ \p\in\Ass(I^k),\ \textit{for all}\ k\ge t\}.
		$$
		
		Notice that $\max_{\p\in\Ass^\infty(I)}\textup{astab}_\p(I)=\textup{astab}(I)$ and $\textup{astab}_\p(I)\le\v_\p\!\textup{-stab}(I)$. On the other hand,
		$$
		\textup{vstab}(I)\ \le\ \max_{\p\in\Ass^\infty(I)}\v_\p\!\textup{-stab}(I)
		$$
		and the inequality may be strict.
		
		For instance, for any edge ideal $I(G)$ having linear resolution, $\textup{vstab}(I(G))=1$. Indeed, by \cite[Theorem 5.1]{F2023} we have $\v(I(G)^k)=2k-1$ for all $k\ge1$. However, in most cases $\Ass(I(G))\ne\Ass^\infty(I(G))$. Hence, for some $\p\in\Ass^\infty(I(G))$, $\v_\p\!\textup{-stab}(I(G))\ge\textup{astab}_\p(I(G))\ge2>1=\textup{vstab}(I(G))$. For a simple example, consider $I(G)=(x_1x_2,x_1x_3,x_2x_3)$, the edge ideal of the complete graph on three vertices. Notice that $I(G)$ has linear resolution. The package \texttt{VNumber} \cite{FSPack} reveals that $\Ass(I(G))=\{(x_1,x_2),(x_1,x_3),(x_2,x_3)\}$ and $\Ass(I(G)^k)=\Ass(I(G))\cup\{(x_1,x_2,x_3)\}$ for all $k\ge2$. So that $\v_{(x_1,x_2,x_3)}\!\textup{-stab}(I(G))=2$.\smallskip
		
		This example also shows that $\textup{vstab}(I)$ can be smaller than $\textup{astab}(I)$. On the other hand, if $I$ is a monomial ideal of $K[x,y]$, by \cite[Corollary 5.3]{FS2} we have $\textup{astab}(I)=1$, while $\textup{vstab}(I)\ge 1$ can be bigger than $\textup{astab}(I)$. For instance, consider $I=(x^{11},x^7y^5,x^4y^{10},xy^{11},y^{14})\subset S=K[x,y]$. Then $\textup{astab}(I)=1$ but $\textup{vstab}(I)=2$. Thus, there is no comparison between the stability indices $\textup{astab}(I)$ and $\textup{vstab}(I)$.\smallskip
		
		Again, let $I\subset S$ be a graded ideal. By Brodmann \cite{B79a}, $\depth I^k=\depth I^{k+1}$ for all $k\gg0$. We denote by $\textup{dstab}(I)$ the least integer $t$ such that $\depth I^k=\depth I^{k+1}$ for all $k\ge t$. Finally, let $\reg(I^k)=ck+d$ for all $k\gg0$. We denote by $\textup{rstab}(I)$ the least integer $t$ such that $\reg(I^k)=ck+d$ for all $k\ge t$.
		
		\begin{Question}
			Given positive integer $a,d,r,v\ge 1$, can we find a graded ideal $I$ in some polynomial ring $S$ such that
			$$
			\textup{astab}(I)=a,\ \ \ \textup{dstab}(I)=d,\ \ \ \textup{rstab}(I)=r,\ \ \ \textup{vstab}(I)=v?
			$$
		\end{Question}
		
		Let $I\subset S$ be a monomial ideal. In \cite{Hoa06}, Le Tuan Hoa bounded the index $\textup{astab}(I)$ of $I$ using exquisite techniques from combinatorics and asymptotic linearity of integer programming. For similar recent results, see the papers \cite{HRR23,Hoa22}.
		\begin{Problem}
			Let $I\subset S$ be a monomial ideal. Using integer programming techniques, determine good upper bounds for the $\v$-stability index $\textup{vstab}(I)$ of $I$.
		\end{Problem}
		
		\textbf{Acknowledgment.} We gratefully acknowledge the referee for their suggestions and comments that greatly improved the quality of the manuscript. We thank A. Vanmathi and P. Sarkar for their careful reading of the paper and pointing out some inaccuracies in a previous version of the manuscript.
		
		A. Ficarra was partly supported by the Grant JDC2023-051705-I funded by
		MICIU/AEI/10.13039/501100011033 and by the FSE+.
		
	\end{document}